\newtheorem{theorem}{Theorem}[section]
\newtheorem{lemma}[theorem]{Lemma}
\newtheorem{proposition}[theorem]{Proposition}
\newtheorem{corollary}[theorem]{Corollary}
\newtheorem{definition}[theorem]{Definition}
\newtheorem{example}[theorem]{Example}
\newtheorem{assumption}[theorem]{Assumption}
\author{Erik Sj\"oland}
\title{Using real algebraic geometry to solve combinatorial problems with symmetries}
\begin{document}

\newpage

\maketitle

\begin{abstract}
Many combinatorial problems can be formulated as a polynomial optimization problem that can be solved by state-of-the-art methods in real algebraic geometry. In this paper we explain many important methods from real algebraic geometry, we review several applications and discuss implementation and computational aspects.
\end{abstract}

\tableofcontents

\newpage

%%%%%%%%%%%%%%%%%%%%%%%%
%     Introduction to semidefinite programing     %
%%%%%%%%%%%%%%%%%%%%%%%% 
\section{Introduction to semidefinite programing}
\label{sec:SDP}
Special cases of Linear Programming (LP) dates back to Fourier, but linear programming was in its generality first studied by Leonid Kantorovich in 1939 to reduce costs for the Russian army during World War II. In 1947 George Danzig published a paper with the famous \emph{simplex method} to solve LPs, which is usually extremely efficient in practice but has exponential worst case scenarios. Several other algorithms have been proposed including the \emph{ellipsoid algorithm}, which was proposed by Shor in 1972. Even though the ellipsoid algorithm is not convenient in practice it is the first algorithm proven to solve linear programming problems in polynomial time by Khachiyan in 1979 \cite{Khachiyan1979}. Another class of algorithms to solve linear programs are the \emph{interior point methods}. The original method was invented by John von Neumann around the same time as Danzig introduced the simplex algorithm, and was later popularized by an efficient algorithm to give an approximate solution to an LP problem by Narendra Karmarkar in 1984 \cite{Karmarkar1984}.  Complexity of LP-algorithms is still an active field of research, and LP has since it was introduced been used in a tremendous range of applications.

Semidefinite programming (SDP) started evolving from Linear Programming in the 1960s, and can be used to solve a wider variety of problems. Any LP can be formulated as an SDP, which is why it is natural to first define LP and then let it serve as a bridge to and as a first example of an SDP. There are various algorithms for finding close to optimal solutions to SDPs in polynomial time. Interior point methods have been the most popular and there are several SDP solvers (including CSDP, SeDuMi, SDPT3, DSDP, SDPA) that have good implementations of these algorithms. For a more extensive overview we refer to \cite{Vandenberghe1996, Todd2001}.

In an LP you want to minimize or maximize a linear function under linear constraints. Any LP can be rewritten in canonical form:
\[ 
\begin{array}{rl}
 \inf  & \displaystyle  c^Tx \\
\textnormal{subject to} & \displaystyle  Ax \leq b, \\
& \displaystyle  x \geq 0, \\
& \displaystyle  x \in \mathbb{R}^n,
\end{array}
\]
where $A \in \mathbb{R}^{m \times n}$, $b \in \mathbb{R}^m$, $c \in \mathbb{R}^n$. In many textbooks there are also equality constraints in the formulation of the LP, but these are redundant as they can be rewritten as inequalities by $d^Tx = e \Leftrightarrow \{ d^Tx \leq e \textrm{ and } -d^Tx \leq -e\}$.

An LP in canonical form is referred to as the \emph{primal problem}, and we define the \emph{dual problem} of an LP to be
\[ 
\begin{array}{rl}
\sup  & \displaystyle  b^Ty \\
\textnormal{subject to} & \displaystyle  A^Ty \geq c, \\
& \displaystyle  y \geq 0, \\
& \displaystyle  y \in \mathbb{R}^m.
\end{array}
\]
The dual is also a linear LP problem, and it plays a strong role of many of the algorithms for solving the original primal LP problem. Fundamental in duality theory is that the dual of the dual problem is the original primal problem. Even more important for optimization theory is the concepts of \emph{weak duality} and \emph{strong duality}. Weak duality ensures that any value of the dual problem gives a lower bound to the primal problem, and strong duality ensures that if the solution to one of the problems is finite the primal and dual solutions are equal. To prove that weak and strong duality holds for linear programs is a good exercise, and part of any elementary optimization course.

We are now ready to take the step into semidefinite programming. The essential idea is that we want to replace the non-negativity condition on the vector $x \in \mathbb{R}^n$ with an appropriate non-negativity condition on a symmetric matrix $X \in \mathbb{R}^{n \times n}$. It is natural to replace the non-negativity condition with a \emph{linear matrix inequality} (LMI), which is equivalent to requiring that the matrix $X$ is \emph{positive semidefinite}, which has several equivalent definitions:
%%%% REFERENCE %%%%
\begin{theorem}
Let $A$ be a symmetric $n \times n$ matrix. The following are equivalent
\begin{itemize}
\item[(a)] $A$ is positive semidefinite.
\item[(b)] $x^TAx \geq 0$ for all $x \in \mathbb{R}^n$.
\item[(c)] All eigenvalues of $A$ are nonnegative.
\item[(d)] There is a unique lower triangular matrix $L \in \mathbb{R}^{n \times n}$ with $L_{jj} \geq 0$ for all $j \in \{1,\dots, n\}$ such that $LL^T = A$.
\item[(e)] All principal minors of $A$ are nonnegative.
\end{itemize}
\end{theorem}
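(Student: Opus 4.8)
The plan is to prove the equivalences by establishing a cycle of implications, which is the most economical strategy for a theorem with five equivalent conditions. Rather than proving all $5 \times 4$ pairwise implications, I would arrange them in a chain: $(a) \Rightarrow (b) \Rightarrow (c) \Rightarrow (d) \Rightarrow (e) \Rightarrow (a)$, or some convenient reordering. Since positive semidefiniteness is most commonly \emph{defined} by condition (b), I would first clarify which definition the paper takes as primitive; assuming (a) is the abstract notion, the equivalence $(a) \Leftrightarrow (b)$ may be essentially a definitional unfolding, so the real content lies in connecting (b), (c), (d), and (e).

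First I would prove $(b) \Rightarrow (c)$: if $\lambda$ is an eigenvalue with unit eigenvector $v$, then $0 \leq v^T A v = \lambda \lVert v \rVert^2 = \lambda$, forcing $\lambda \geq 0$. Then for $(c) \Rightarrow (b)$, I would invoke the spectral theorem: since $A$ is symmetric, write $A = Q \Lambda Q^T$ with $Q$ orthogonal and $\Lambda$ diagonal with nonnegative entries, so that $x^T A x = (Q^T x)^T \Lambda (Q^T x) = \sum_i \lambda_i y_i^2 \geq 0$ where $y = Q^T x$. This handles $(b) \Leftrightarrow (c)$ cleanly.

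Next I would connect these to the Cholesky factorization in (d). For $(c) \Rightarrow (d)$ (existence), the spectral decomposition gives $A = (Q \Lambda^{1/2})(Q \Lambda^{1/2})^T$, a factorization $A = BB^T$; obtaining specifically a \emph{lower triangular} $L$ with nonnegative diagonal requires either the recursive Cholesky algorithm (peeling off the leading entry and recursing on the Schur complement) or a QR-type argument, and I would need to argue uniqueness separately, typically by induction on $n$ comparing entries column by column. Conversely $(d) \Rightarrow (b)$ is immediate since $x^T A x = x^T L L^T x = \lVert L^T x \rVert^2 \geq 0$. Finally, for the principal minors characterization (e), I would close the loop: $(d) \Rightarrow (e)$ follows because each leading principal submatrix inherits its own Cholesky factor and hence has determinant $\prod L_{jj}^2 \geq 0$, while the general principal minors correspond to principal submatrices which are themselves positive semidefinite; for $(e) \Rightarrow (a)$ I would use a limiting or perturbation argument, approximating by positive definite matrices $A + \epsilon I$ whose leading principal minors are strictly positive (Sylvester's criterion) and letting $\epsilon \to 0^+$.

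The main obstacle will be the treatment of condition (e). One must be careful to note that for positive \emph{semi}definiteness it is genuinely necessary to require \emph{all} principal minors to be nonnegative, not merely the leading ones---the standard counterexample being $\mathrm{diag}(0,-1)$, whose sole leading principal minor sequence is $0, 0$ yet which is not positive semidefinite. This subtlety distinguishes the semidefinite case from Sylvester's criterion for definiteness and is where a naive proof tends to break down. I would therefore present the $(e) \Rightarrow (a)$ direction via the perturbation argument above, and separately verify the uniqueness clause in (d), as these two points carry the real mathematical weight while the remaining implications are routine applications of the spectral theorem.
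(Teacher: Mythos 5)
The paper never actually proves this theorem --- it is stated without proof as standard background in Section~\ref{sec:SDP} --- so your proposal has to stand on its own. Most of it does: the cyclic architecture, the spectral-theorem argument for $(b)\Leftrightarrow(c)$, the identity $x^TAx=\|L^Tx\|^2$ for $(d)\Rightarrow(b)$, the perturbation $A+\epsilon I$ with Sylvester's criterion for $(e)\Rightarrow(a)$, and your warning that (e) needs \emph{all} principal minors (with the counterexample $\mathrm{diag}(0,-1)$) are all correct.

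The genuine gap is in your treatment of (d): the uniqueness you propose to establish ``by induction on $n$ comparing entries column by column'' is \emph{false} for positive semidefinite matrices, so no argument can complete that step. Take
\[
A = \begin{pmatrix} 0 & 0 \\ 0 & 1 \end{pmatrix}, \qquad
L_b = \begin{pmatrix} 0 & 0 \\ b & \sqrt{1-b^2} \end{pmatrix}, \quad b \in [-1,1].
\]
Every $L_b$ is lower triangular with nonnegative diagonal and satisfies $L_bL_b^T = A$, so this positive semidefinite matrix has a continuum of Cholesky factors. The column-by-column induction you have in mind works precisely when every pivot $L_{jj}$ is strictly positive, because one divides by $L_{jj}$ to solve for the entries below it; at a zero pivot (which occurs exactly when $A$ is singular, since $\det A = \prod_j L_{jj}^2$) those entries are no longer determined by $A$, and, as the example shows, uniqueness genuinely fails. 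So only the existence half of (d) is equivalent to (a), (b), (c), (e); uniqueness requires $A \succ 0$, or an extra normalization such as forcing the rest of a column to vanish whenever its pivot does. Since you were alert enough to flag the analogous subtlety in (e), you should flag this one too: a blind proof of the theorem as literally stated cannot be completed, and the honest outcome is to prove existence in (d), exhibit the counterexample to uniqueness, and note that the statement needs to be amended.
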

The trace is a linear function on matrices, which generalizes the linear polynomials in LP. The canonical form of a semidefinite program is:
\[ 
\begin{array}{rll}
\inf  & \displaystyle  \mathrm{tr}(C^TX) \\
\textnormal{subject to} & \displaystyle  \mathrm{tr}(A_i^TX) \leq b_i, & 1 \leq i \leq m, \\
& \displaystyle  X \succeq 0, \\
& \displaystyle  X \in \mathbb{R}^{n\times n}, \\
& \displaystyle  X \textrm{ is symmetric},
\end{array}
\]
where $C \in \mathbb{R}^{n\times n}$, $A_i \in  \mathbb{R}^{n \times n}$ and $b_i \in \mathbb{R}$ for $1 \leq i \leq m$.

Observe that the feasible set $\{X \textrm{ symmetric real } n\times n \textrm{-matrix} :  \mathrm{tr}(A_i^TX) \leq b_i \textrm{ for } i =1,\dots,m, X \succeq 0\}$ is convex, and so SDP is a special class of convex optimization problems. 

Just as in the LP case the original problem is called primal problem, and there is an associated dual problem:
\[ 
\begin{array}{rll}
\sup  & \displaystyle  b^Ty \\
\textnormal{subject to} & \displaystyle  C-\sum_{i=1}^m A_iy_i \succeq 0, \\
& \displaystyle  y \in \mathbb{R}^{m}.
\end{array}
\]
Weak duality holds just as for LP, but strong duality does not always hold as the following standard example shows:
\begin{example}
The primal problem
\[ 
\begin{array}{rll}
\inf  & x_1 \\
\textnormal{subject to} & 
\displaystyle  \left( \begin{array}{ccc}
0 & x_1 & 0 \\
x_1 & x_2 & 0 \\
0 & 0 & x_1+1 
\end{array} \right) \succeq 0, \\
 & \displaystyle  x_1,x_2 \in \mathbb{R}, \\
\end{array}
\]
has the dual
\[ 
\begin{array}{rll}
\sup  & \displaystyle  -y_2 \\
\textnormal{subject to} & 
 \displaystyle  \left( \begin{array}{ccc}
y_1 & (1-y_2)/2 & 0 \\
(1-y_2)/2 & 0 & 0 \\
0 & 0 & y_2 
\end{array} \right) \succeq 0, \\
 & \displaystyle  y_1,y_2 \in \mathbb{R}. \\
\end{array}
\]
In order for the matrices to be positive semidefinite we see that both $x_1$ and $(1-y_2)/2$ must be $0$, so the solution to the primal problem is $0$ and the solution to the dual problem is $-1$.
\end{example}
Even though strong duality does not always hold it turns out that it holds under a condition called Slater's condition, which is a requirement of positive definiteness in either the primal or dual. Because of the importance of strong duality we finish the introduction by giving the exact statement:
%%%% REFERENCE %%%%
\begin{theorem}
Let (P) denote a primal problem in canonical form and (D) its dual. Let $p^*$ and $d^*$ denote the optimal values of (P) and (D) respectively. If $p^*$ is finite and there exists a solution $X$ to (P) with $X \succ 0$, then (D) is feasible and $p^*=d^*$. Analogously, if $d^*$ is finite and there exists a solution $y$ to (D) for which $C - \sum_{i=1}^m A_iy_i \succ 0$, then (P) is feasible and $p^*=d^*$.
\end{theorem}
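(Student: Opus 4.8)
The plan is to reduce everything to a single separating-hyperplane argument and then invoke the weak duality already available to us. Weak duality gives $d^* \le p^*$ for free, so the entire content is to manufacture a dual-feasible point whose objective value reaches $p^*$. I would work in the space of primal value–constraint pairs and build the convex set
\[
\mathcal{A} = \Big\{ (u,t) \in \mathbb{R}^m \times \mathbb{R} : \exists\, X \succeq 0 \text{ with } \mathrm{tr}(A_i^T X) - b_i \le u_i \ (1\le i \le m),\ \mathrm{tr}(C^T X) \le t \Big\}.
\]
Since $\{X \succeq 0\}$ is convex and $X \mapsto \mathrm{tr}(A_i^T X)$, $X \mapsto \mathrm{tr}(C^T X)$ are linear, $\mathcal{A}$ is convex; by construction it is upward closed in each coordinate, and $p^* = \inf\{ t : (0,t) \in \mathcal{A}\}$.

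Next I would place the point $(0,p^*)$ on the boundary of $\mathcal{A}$: it lies in $\overline{\mathcal{A}}$ by definition of the infimum, whereas $(0, p^*-\varepsilon) \notin \mathcal{A}$ for every $\varepsilon > 0$, since such a point would exhibit a feasible $X$ with objective strictly below $p^*$. The supporting hyperplane theorem for convex sets then yields a nonzero normal $(\lambda,\mu)$ with $\sum_{i} \lambda_i u_i + \mu t \ge \mu p^*$ for all $(u,t) \in \mathcal{A}$. Feeding in the points $(\mathrm{tr}(A_i^T X) - b_i,\ \mathrm{tr}(C^T X))$ for arbitrary $X \succeq 0$, and using that $\mathcal{A}$ is upward closed (so letting coordinates tend to $+\infty$ stays feasible), I obtain $\lambda \ge 0$, $\mu \ge 0$, and, after collecting terms, $\mathrm{tr}\big((\textstyle\sum_i \lambda_i A_i + \mu C)^T X\big) \ge \mu p^* + \lambda^T b$ for all $X \succeq 0$. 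Letting $X$ grow along any positive semidefinite direction forces $\sum_i \lambda_i A_i + \mu C \succeq 0$, and taking $X = 0$ gives $\mu p^* + \lambda^T b \le 0$.

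The crux — and the step where Slater's hypothesis is indispensable — is to show $\mu > 0$, so that the supporting hyperplane is not ``vertical.'' Suppose $\mu = 0$. Then $M := \sum_i \lambda_i A_i \succeq 0$ and $\mathrm{tr}(M^T X) \ge \lambda^T b$ for all $X \succeq 0$, with $\lambda \ge 0$ and $\lambda \ne 0$. Evaluating at the strictly feasible $\bar X \succ 0$ and using $\mathrm{tr}(A_i^T \bar X) \le b_i$ gives $\mathrm{tr}(M^T \bar X) \le \lambda^T b$, while positive definiteness of $\bar X$ forces $\mathrm{tr}(M^T \bar X) \ge 0$, with equality only when $M = 0$. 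Pushing these inequalities together (together with $\lambda^T b \le 0$ from $X=0$) collapses the normal to the degenerate case $M = 0$, $\lambda^T b = 0$, which — assuming, as one always may, that the $A_i$ are linearly independent and redundant constraints have been discarded — contradicts $\lambda \ne 0$. I expect this positivity argument to be the main obstacle, since it is precisely here that strict feasibility is converted into a \emph{strict} inequality through $\bar X \succ 0$.

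Once $\mu > 0$ I normalize $\mu = 1$ and read off the dual point $y = -\lambda$: the relation $\sum_i \lambda_i A_i + C \succeq 0$ becomes $C - \sum_i A_i y_i \succeq 0$, so $y$ is feasible for (D), and $p^* + \lambda^T b \le 0$ becomes $b^T y \ge p^*$. Combined with weak duality $b^T y \le d^* \le p^*$, this squeezes $b^T y = d^* = p^*$, establishing both feasibility of (D) and equality of the optimal values (indeed dual attainment). Finally, the ``analogously'' clause follows by symmetry: since the dual of (D) is again (P), I would rewrite (D) in canonical primal form and apply the statement just proved, the strict-feasibility hypothesis now being supplied by a $y$ with $C - \sum_i A_i y_i \succ 0$.
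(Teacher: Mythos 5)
Your strategy is the classical separating-hyperplane proof of Slater-type strong duality; note that the paper itself states this theorem without proof, so your argument is judged on its own merits rather than against a "paper proof." Most of the steps are sound: convexity and upward-closure of $\mathcal{A}$, the placement of $(0,p^*)$ on the boundary, the sign conditions $\lambda \geq 0$, $\mu \geq 0$, the deduction $\sum_i \lambda_i A_i + \mu C \succeq 0$ from self-duality of the PSD cone, and the normalization $\mu = 1$ producing a dual point $y=-\lambda$ with $b^T y \geq p^*$ are all correct. (One side remark: for the paper's primal with inequality constraints, weak duality actually requires a sign restriction on $y$ in (D); your $y = -\lambda \leq 0$ is consistent with that correctly signed dual, not with the dual as literally printed in the paper, where $y$ is free --- that discrepancy is the paper's, not yours.)

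The genuine gap is in ruling out $\mu = 0$. The hypothesis only provides a feasible $\bar X \succ 0$; the linear constraints $\mathrm{tr}(A_i^T \bar X) \leq b_i$ may all be tight at $\bar X$. Your degenerate-case analysis correctly reaches $M = \sum_i \lambda_i A_i = 0$ and $\lambda^T b = 0$ with $\lambda \geq 0$, $\lambda \neq 0$, but the claim that one may "always" assume the $A_i$ linearly independent after discarding redundant constraints is false in this canonical form: constraints with linearly dependent matrices need not be redundant. The paper's own convention of encoding an equality $\mathrm{tr}(A^T X) = b$ as the pair $\mathrm{tr}(A^T X) \leq b$, $\mathrm{tr}(-A^T X) \leq -b$ produces exactly this situation, and there the vertical normal $\lambda = (1,1)$, $\mu = 0$ genuinely supports $\mathcal{A}$ (every point of $\mathcal{A}$ satisfies $u_1 + u_2 \geq 0$), so no contradiction is available and the proof stalls, even though the theorem is still true in that case. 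This is not a pathological corner: it is precisely the "refined Slater" situation in which affine constraints are allowed to hold non-strictly, and closing it requires an additional idea --- either identify the implicit equality constraints and work in the relative interior of the feasible region (as in Rockafellar's treatment), or strengthen the hypothesis to $\mathrm{tr}(A_i^T \bar X) < b_i$ for all $i$, in which case evaluating the $\mu=0$ inequality at $\bar X$ forces $\lambda = 0$ directly and no independence assumption is needed. As written, the proof covers only the strictly feasible case, which is weaker than the stated theorem.
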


%%%%%%%%%%%%%%%%%%%%%%%%%%%%%%%%%
%%%%%%%%%%%%%%%%%%%%%%%%%%%%%%%%%
%%%%%%%%%%%%%%%%%%%%%%%%%%%%%%%%%
%    Using semidefinite programming in real algebraic geometry   %
%%%%%%%%%%%%%%%%%%%%%%%%%%%%%%%%%
%%%%%%%%%%%%%%%%%%%%%%%%%%%%%%%%%
%%%%%%%%%%%%%%%%%%%%%%%%%%%%%%%%%
\newpage
\section{Polynomial optimization using real algebraic geometry}
\label{sec:RAG}
In both mathematical and real world applications we encounter problems where we need to find the optimal value of a polynomial under polynomial constraints:
\begin{equation} 
\label{eq:prim0}
\begin{array}{rll}
\inf  & \displaystyle  f(x) \\
\textnormal{subject to} & \displaystyle  g_1(x) \geq 0, \dots, g_m(x) \geq 0, \\
& \displaystyle  x \in \mathbb{R}^n.
\end{array}
\end{equation}
We discuss how to attack polynomial optimization problems using methods that have grown out from classical questions about sums of squares. Although some of the problems and results in this chapter dates back to Hilbert it is still not obvious how to find exact or approximative solutions to most polynomial optimization problem. This is still an active area of research, and there are still many interesting related open problems. We aim to keep the level of abstraction low in order to make it easier to follow, and we only discuss results in real algebraic geometry related to polynomial optimization. A more extensive survey is the chapter on the subject by Laurent \cite{Laurent2009}. It is important to note that once a final sum of squares certificate has been found using our proposed methods, one does not need polynomial optimization to verify that the solution is correct.

\begin{definition}
Let $f_1,\dots,f_s,g \in \mathbb{R}[x_1,\dots,x_n]$.

A subset of $\mathbb{R}^n$ is \emph{basic semialgebraic} if it is on the form $\{ f_1 \geq 0, \dots, f_s \geq 0, g \neq 0 \}$ and \emph{semialgebraic} if it is a finite union of basic semialgebraic sets.

A subset of $\mathbb{R}^n$ is \emph{basic closed semialgebraic} if it is on the form $\{ f_1 \geq 0, \dots, f_s \geq 0\}$.
\end{definition}
We mentioned earlier that we can state equalities as inequalities, $f = 0 \Leftrightarrow f \geq 0, -f \geq 0$, and in a similar fashion we can replace strict inequalities: $ f> 0 \Leftrightarrow f \geq 0, f \neq 0$. We see that the basic semialgebraic sets contains all sets defined by $=, \geq, \neq, >$, whereas the basic closed semialgebraic sets are limited to $=$ and $\geq$, and is more suitable for optimization.

The classical Nullstellensatz that dates back to Hilbert states that if $f \in \mathbb{C}[x_1,\dots,x_n]$ vanishes on the subset of $\mathbb{C}^n$ defined by $g_1=g_2=\dots=g_m=0$, where $g_1,\dots,g_m \in \mathbb{C}[x_1,\dots,x_n]$, then some power of $f$ lies in the ideal of $\mathbb{C}[x_1,\dots,x_n]$ generated by $g_1,\dots,g_m$. The Nullstellensatz is a fundamental result in classical algebraic geometry, which deals with subsets in $\mathbb{C}^n$ defined by polynomial equations. Real algebraic geometry deals with subset of $\mathbb{R}^n$ defined by polynomial equations and inequalities, and the two topics are in some respects similar and in others not. An essential property for the field $\mathbb{C}^n$ is that it is algebraically closed; every non-constant polynomial $f \in \mathbb{C}[x_1,\dots,x_n]$ has a root in $\mathbb{C}^n$. This is not the case for $\mathbb{R}^n$ as for example $x^2+1=0$ does not have any real solutions. Despite this major drawback for the real numbers, many results carry over from the analogous complex cases and similar methods and techniques can often be used in real algebraic geometry. One thing that is not immediately clear is how to translate the Nullstellensatz into the real algebraic context, and a lot of research has been done in this direction. The analogue to the Nullstellensatz is called the Positivstellensatz, and there are several versions of it depending on, among other things, the properties of the set $\{g_1 \geq 0, \dots, g_m \geq 0 \}$, and they are all aiming to find the most suitable condition to guarantee positivity of $f$. Once we have a certificate of positivity we can find the minimum of a polynomial $f(x) \in \mathbb{R}[x]$ by finding the largest $\lambda \in \mathbb{R}$ such that $f(x)-\lambda$ is positive for all $x \in \mathbb{R}^n$. In this section we explore the duality between positive polynomials and the theory of moments, and how it can be used to find solutions or bounds to polynomial optimization problems.

%%%%%%%%%%%%%%%%%%%%%%%%%
%    Sums of squares and the Positivstellensatz   %
%%%%%%%%%%%%%%%%%%%%%%%%%
\section{Sums of squares and the Positivstellensatz}
\label{sec:Pos}
To be able to state the form of the Positivstellensatz that is most suitable for our optimization purposes we first need to understand the basic connections between positivity and sums of squares. First some simplifying notation: We write $\mathbb{R}[x]=\mathbb{R}[x_1,\dots,x_n]$, $f \geq 0$ when $f(x) \geq 0$ for all $x \in \mathbb{R}^n$ and $f > 0$ when $f > 0$ for all $x \in \mathbb{R}^n$.

It is obvious that if $f$ is a sum of squares, say $f=f_1^2+\dots+f_k^2$, then $f(x) = f_1(x)^2+\dots+f_k(x)^2 \geq 0$ for all $x \in \mathbb{R}^n$ whereas questions about the converse can be very difficult. There exist nonnegative polynomials that are not sums of squares of polynomials, which was known already by Hilbert. Motzkin provided the first concrete example of such a polynomial in 1967: 
\begin{proposition}
The polynomial $s(x,y) = 1 - 3x^2y^2+x^2y^4+x^4y^2$ is positive on $\mathbb{R}^2$, but it is not a sum of squares in $\mathbb{R}[x,y]$.
\end{proposition}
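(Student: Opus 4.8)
The plan is to treat the two assertions separately: nonnegativity is a one-line inequality, whereas the failure to be a sum of squares is where the actual work lies.

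For nonnegativity I would apply the arithmetic--geometric mean inequality to the three nonnegative monomials $1$, $x^2y^4$ and $x^4y^2$. Their geometric mean is $\sqrt[3]{1\cdot x^2y^4\cdot x^4y^2}=\sqrt[3]{x^6y^6}=x^2y^2$, so AM--GM gives $\tfrac{1}{3}\bigl(1+x^2y^4+x^4y^2\bigr)\geq x^2y^2$, that is $1-3x^2y^2+x^2y^4+x^4y^2\geq 0$, which is precisely $s(x,y)\geq 0$. (Strictly, $s$ vanishes at $(\pm1,\pm1)$, where the three monomials coincide and AM--GM is an equality, so ``positive'' here is to be read as nonnegative.)

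For the second claim I would argue by contradiction: suppose $s=\sum_i p_i^2$ with $p_i\in\mathbb{R}[x,y]$. Since $\deg s=6$ and its degree-$6$ part $x^4y^2+x^2y^4$ does not cancel, each $p_i$ has degree at most $3$, so each $p_i$ is a real linear combination of $1,x,y,x^2,xy,y^2,x^3,x^2y,xy^2,y^3$. The strategy is to compare coefficients of $\sum_i p_i^2$ and $s$ one monomial at a time: whenever a coefficient of $s$ is $0$ and the corresponding monomial of $\sum_i p_i^2$ can only arise as a pure square of a coefficient, that coefficient must vanish in every $p_i$. Reading off the coefficients of $x^6$ and $y^6$ kills the $x^3$- and $y^3$-terms; then the vanishing coefficients of $x^4$ and $y^4$ kill the $x^2$- and $y^2$-terms; then the vanishing coefficients of $x^2$ and $y^2$ kill the $x$- and $y$-terms. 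What survives is $p_i=\alpha_i+\beta_i\,xy+\gamma_i\,x^2y+\delta_i\,xy^2$. The only way to produce the monomial $x^2y^2$ from such a $p_i^2$ is as $(\beta_i\,xy)^2$, since no available cross term $(1)(x^2y^2)$, $(x)(xy^2)$ or $(y)(x^2y)$ is present; matching the coefficient of $x^2y^2$ in $s$ therefore forces $\sum_i\beta_i^2=-3$, which is impossible for real $\beta_i$. This contradiction shows $s$ is not a sum of squares.

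The main obstacle is the bookkeeping in the elimination step: one must verify, at each stage and in the right order, that the targeted monomial of $s$ receives contributions only from full squares of coefficients and not from cross terms that could have either sign. This is exactly what the support analysis guarantees, and it is cleanest when phrased through Newton polytopes --- the support of each $p_i$ must lie in half the Newton polytope of $s$, whose lattice points are precisely $1,xy,x^2y,xy^2$ --- but, as sketched above, it can also be carried out entirely by hand.
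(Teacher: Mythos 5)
Your proof is correct and follows essentially the same route as the paper's: AM--GM applied to $1$, $x^2y^4$, $x^4y^2$ for nonnegativity, then the degree bound, the stepwise elimination of the monomials $x^3,y^3,x^2,y^2,x,y$, and the coefficient of $x^2y^2$ forcing $\sum_i\beta_i^2=-3$. Your added care about the elimination order (and the remark that $s$ actually vanishes at $(\pm1,\pm1)$, so ``positive'' means nonnegative) only makes explicit what the paper's ``by the same argument'' glosses over.
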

\begin{proof}
Positivity follows directly by plugging in $a = 1$, $b= x^2y^4$, and $c=x^4y^2$ into the inequality of the arithmetic and geometric means
\[
\frac{a+b+c}{3} \geq (abc)^{1/3} (\textrm{if } a,b,c \geq 0).
\]
To show that $s(x,y)$ is not a sum of squares, suppose to the contrary that $s=f_1^2+\dots f_n^2$ for some polynomials $f_i \in \mathbb{R}[x,y]$. If $d = \max \{\deg (f_i)| i \in \{1,\dots,n\} \}$, then $s$ must be of degree $2d$ ($\leq 2d$ is obvious, and for the other direction let $f_{id}$ be the homogeneous part of degree $d$ of $f_i$. The part of degree $2d$ of $f$ is $f_{1d}^2+\dots +f_{nd}^2$ out of which at least one is nonzero by definition of $d$, and $\geq 2d$ follows). Since $s$ is of degree $6$ any $f_i$ can have degree at most $3$ and must therefore be a linear combination of the monomials
\[
1,x,y,x^2,xy,y^2,x^3,x^2y,xy^2,y^3.
\]
If an $x^3$-term would appear in any $f_i$ then an $x^6$ term would appear in $s$ with a positive coefficient. There is no way to cancel that term, so there is no $x^3$-term in any $f_i$. By the same argument we can conclude that there are no terms $x^2$, $x$, $y^3$, $y^2$ and $y$ either and so $f_1,\dots,f_n$ are on the form
\[
f_i = a_i + b_ixy+c_ix^2y+d_ixy^2.
\]
We get a contradiction by 
\[
\sum_{i=1}^nb_i^2 = -3.
\]
\end{proof}
Despite the negative result one could hope for general theorems when we change the settings slightly. In 1900 Hilbert posed a famous set of problems, of which the 17th asked was whether any positive polynomial can be written as a sum of squares of rational functions. Artin proved the general case of this problem in 1927:
\begin{theorem}
For any $f \in \mathbb{R}[x_1,\dots,x_n]$ it is true that $f \geq 0$ on $\mathbb{R}^n$ if and only if $f$ is a sum of squares of rational functions.
\end{theorem}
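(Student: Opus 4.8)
The plan is to dispose of the easy direction immediately and then attack the substantial one through the Artin--Schreier theory of ordered fields combined with a transfer principle. If $f = \sum_i (p_i/q_i)^2$ is a sum of squares of rational functions, then at every point $x \in \mathbb{R}^n$ where all denominators $q_i$ are nonzero we have $f(x) \geq 0$; since such points are dense in $\mathbb{R}^n$, continuity of the polynomial $f$ forces $f \geq 0$ everywhere. So the entire content lies in the converse: assuming $f \geq 0$ on $\mathbb{R}^n$, I must produce a representation of $f$ as a sum of squares in the field $K = \mathbb{R}(x_1,\dots,x_n)$.

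I would argue by contradiction, so suppose $f$ is not a sum of squares in $K$. The first key step is the Artin--Schreier characterization of sums of squares: in a field of characteristic $0$, an element is a sum of squares if and only if it is nonnegative under \emph{every} ordering of the field. I would establish the direction I need by constructing an ordering directly. Since $K$ is formally real and $f \notin \sum K^2$, the set
\[
T = \sum K^2 + (-f)\sum K^2
\]
is a proper preordering, meaning $-1 \notin T$; the crucial lemma is that adjoining $-f$ to the preordering $\sum K^2$ keeps it proper precisely because $f$ itself is not a sum of squares. A Zorn's lemma argument then extends $T$ to a maximal proper preordering $P$, which is a total ordering of $K$, and by construction $-f \in P$ while $f \notin P$; that is, $f < 0$ in the ordered field $(K,P)$.

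The second key step is to pass to the real closure $R$ of $(K,P)$ and then descend to $\mathbb{R}$. In $R$ the indeterminates $x_1,\dots,x_n$ become honest elements $\xi_1,\dots,\xi_n$, and by construction $f(\xi_1,\dots,\xi_n) < 0$, so the first-order sentence $\exists y_1 \cdots \exists y_n\, f(y_1,\dots,y_n) < 0$, whose parameters are the real coefficients of $f$, holds in $R$. Now I invoke the transfer principle: $\mathbb{R}$ and $R$ are both real closed fields with $\mathbb{R} \subseteq R$, and by Tarski's theorem the theory of real closed fields is model complete, so $\mathbb{R}$ is an elementary substructure of $R$. Hence the same existential sentence holds in $\mathbb{R}$, producing a point $a \in \mathbb{R}^n$ with $f(a) < 0$ and contradicting $f \geq 0$ on $\mathbb{R}^n$.

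The main obstacle is the orderings machinery bundled into the Artin--Schreier step: everything hinges on the lemma that $\sum K^2 + (-f)\sum K^2$ omits $-1$ when $f$ is not a sum of squares, and on the existence of the real closure together with the model completeness of real closed fields (equivalently, the Tarski--Seidenberg transfer principle) that lets an inequality true at the \emph{generic} point $\xi$ specialize to a genuine real point. Verifying those two ingredients rigorously is where the real work lies; the remaining arguments are bookkeeping.
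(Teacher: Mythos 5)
Your proposal is correct, but there is nothing in the paper to compare it against: the paper states this theorem without proof, quoting it as Artin's 1927 solution of Hilbert's 17th problem (the only proof in that section is for the Motzkin polynomial, which addresses the opposite phenomenon). What you have written is essentially Artin's original argument in its modern form, and it is sound. The easy direction via density and continuity is fine. For the hard direction, your two pillars are exactly the standard ones: the Artin--Schreier step, where the identity $-1 = s - ft$ with $s,t \in \sum K^2$ forces either $t=0$ (contradicting formal reality of $K$) or $f = (1+s)t/t^2 \in \sum K^2$, so that $T = \sum K^2 + (-f)\sum K^2$ is proper and Zorn's lemma extends it to an ordering $P$ with $f <_P 0$; and the transfer step, where the real closure $R$ of $(K,P)$ contains the generic point $\xi = (x_1,\dots,x_n)$ with $f(\xi) = f < 0$, and model completeness of real closed fields pulls the existential sentence down to $\mathbb{R}$. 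Two small points you leave implicit are routine but worth recording: $K = \mathbb{R}(x_1,\dots,x_n)$ is formally real (clear a common denominator in $-1 = \sum (p_i/q_i)^2$ and evaluate at a point where the denominator does not vanish), and the ordering of $R$ restricts to the unique ordering of $\mathbb{R}$, so $\mathbb{R} \subseteq R$ is indeed an inclusion of real closed fields to which Tarski's theorem applies. In short, your write-up supplies a complete proof where the paper offers only a citation, and it is the classical one.
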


We are now ready to state the first version of the Positivstellensatz, which has some similarities with Hilbert's 17th problem. The main ideas were due to Krivine in 1964 \cite{Krivine1964}, and Stengle in 1974 \cite{Stengle1974}.  We present a version from \cite{Marshall2008} that fits with our notation.

\begin{theorem}[Positivstellensatz]
Let $f,g_1,\dots,g_m \in \mathbb{R}[x_1,\dots,x_n]$, $K= \{x \in \mathbb{R}^n | g_i(x) \geq 0, i =1,\dots, m\}$ and 
\[
T = \{ \sum_{e \in \{0,1\}^m} \sigma_e g^e | \sigma_e \textrm{ is a sum of squares for all } e \in \{0,1\}^m \}.
\]
\begin{itemize}
\item[(a)] $f>0$ on $K$ if and only if there exists $p,q \in T$ such that $pf = 1+q$.
\item[(b)] $f \geq 0$ on $K$ if and only if there exists an integer $m \geq 0$ and $p,q \in T$ such that $pf = f^{2m} + q$.
\item[(c)] $f = 0$ on $K$ if and only if there exists an integer $m \geq 0$ such that $-f^{2m} \in T$.
\item[(d)] $K = \emptyset$ if and only if $-1 \in T$.
\end{itemize}
\end{theorem}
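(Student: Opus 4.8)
The plan is to read each biconditional as two implications and to treat them very differently: the ``certificate $\Rightarrow$ sign condition'' directions are elementary evaluations, while the ``sign condition $\Rightarrow$ certificate'' directions carry all the content. I would then organize the four hard directions so that (a), (b) and (c) are deduced from the single emptiness statement (d), and concentrate the real work on the ``only if'' part of (d). Throughout I use that every element of $T$ is nonnegative on $K$: a sum of squares is nonnegative everywhere, each product $g^e=\prod_i g_i^{e_i}$ with $e\in\{0,1\}^m$ is nonnegative on $K$ by the definition of $K$, and $T$ is built from these by the sums and products under which it is closed.

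The easy directions are then pure sign-chasing. In (a), an identity $pf=1+q$ with $p,q\in T$ gives $pf\geq 1$ on $K$; since $p\geq 0$ there, $f$ can be neither zero nor negative, so $f>0$ on $K$. In (b), $pf=f^{2m}+q$ gives $pf\geq 0$ on $K$ with $p\geq 0$, and where $p$ vanishes one reads off $f^{2m}=-q\leq 0$, forcing $f^{2m}=0$ and hence $f=0$; so $f\geq 0$ on $K$. In (c), $-f^{2m}\in T$ is $\leq 0$ on $K$ while $f^{2m}\geq 0$ always, forcing $f=0$ on $K$. In (d), $-1\in T$ would be a nonnegative function on $K$ equal to $-1$, which is possible only if $K=\emptyset$.

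For the hard directions I would reduce to (d). Since $f>0$ on $K$ means $K\cap\{-f\geq 0\}=\emptyset$, I adjoin $-f$ to the generators; because $(-f)^2$ is a square, the resulting preordering is exactly $T+T\cdot(-f)$, so (d) yields $-1=s-tf$ with $s,t\in T$, and rearranging to $tf=1+s$ gives (a). For (c) I apply (b) to $-f^2$, which is $\geq 0$ on $K$ precisely when $f=0$ on $K$; the identity $p(-f^2)=f^{4m}+q$ rearranges to $-f^{4m}=pf^2+q\in T$. The fiddliest reduction is (b) from (d): ``$f\geq 0$ on $K$'' is ``$K\cap\{f<0\}=\emptyset$'', and the strict inequality $-f\geq 0,\ f\neq 0$ is encoded by adjoining a variable $t$ with the equation $tf=1$, applying (d) in $\mathbb{R}[x,t]$, and then substituting $t=1/f$ and clearing the resulting powers of $f$ to return to $\mathbb{R}[x]$, which is where the factor $f^{2m}$ appears.

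Everything thus hinges on the contrapositive of (d): if $-1\notin T$, produce a point of $K$, and I expect this to be the main obstacle, since it is exactly the step that cannot remain inside classical algebra. The route is (i) use Zorn's lemma to enlarge the proper preordering $T$ to a preordering $P$ maximal with $-1\notin P$, and invoke the order-theoretic lemma that such a maximal object is an ordering, so that $\mathfrak{p}=P\cap(-P)$ is a prime ideal and $P$ induces an ordering on the fraction field $k$ of $\mathbb{R}[x]/\mathfrak{p}$, extending the unique ordering of $\mathbb{R}$; (ii) pass to the real closure $R$ of $(k,P)$, in which $\sqrt{g_i}$ exists because each $g_i$ is nonnegative in the ordering; and (iii) apply the Artin--Lang homomorphism theorem (equivalently the Tarski--Seidenberg transfer principle) to the finitely generated $\mathbb{R}$-algebra $\mathbb{R}[x]/\mathfrak{p}[y_1,\dots,y_m]/(y_i^2-g_i)$, which maps to $R$ by $y_i\mapsto\sqrt{g_i}$, to obtain an honest $\mathbb{R}$-point $a$ with $g_i(a)=b_i^2\geq 0$, so $a\in K$, contradicting $K=\emptyset$. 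The two delicate ingredients are the claim in (i) that maximal proper preorderings are orderings with prime support, and the transfer in (iii), where the square-root trick is what forces the inequalities $g_i\geq 0$ to survive the descent to $\mathbb{R}^n$.
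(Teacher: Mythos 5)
The paper never proves this theorem, so there is no in-paper argument to compare against: the statement is quoted as a known result of Krivine and Stengle, in the form given in Marshall's book \cite{Marshall2008}, and the text immediately moves on to its use in optimization. Judged on its own terms, your outline is correct, and it is in fact the classical proof from that cited literature. The easy ``certificate $\Rightarrow$ sign condition'' directions by evaluation on $K$ are fine, including the case split on whether $p$ vanishes in (b). The reductions are the standard ones and all check out: (a) from (d) by adjoining $-f$, so that the new preordering is $T - fT$ and $-1 = s - tf$ rearranges to $tf = 1 + s$; (c) from (b) applied to $-f^2$, using that $pf^2 + q \in T$; and (b) from (d) by the Rabinowitsch-style variable $t$ with $tf = 1$, substituting $t = 1/f$ and multiplying by a high even power $f^{2N}$ so that terms like $f^{2N}\sigma(x,1/f)$ become sums of squares of polynomials. (One cosmetic point: the localization step presumes $f \not\equiv 0$; the case $f \equiv 0$ should be dispatched separately, where $p = q = 0$ and any $m \geq 1$ work.) Your treatment of the hard direction of (d) --- Zorn's lemma to get a maximal proper preordering, the lemma that such a $P$ satisfies $P \cup -P = \mathbb{R}[x]$ with prime support $\mathfrak{p} = P \cap -P$, passage to the real closure $R$ of the ordered residue field, and the Artin--Lang/Tarski transfer applied to $(\mathbb{R}[x]/\mathfrak{p})[y_1,\dots,y_m]/(y_i^2 - \bar{g}_i)$ so that the inequalities $g_i \geq 0$ survive descent to an $\mathbb{R}$-point --- is exactly the argument in Marshall and in Bochnak--Coste--Roy. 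The two ingredients you flag as delicate are indeed the load-bearing ones; both are standard, and a complete write-up would either prove them (the maximality lemma is an elementary computation with identities like $-1 = s \pm at$) or cite them precisely.
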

Recall that we want to find a good way of minimizing $f$. This would be possible if we could find the largest $\lambda$ such that (b) holds for $f-\lambda$. The problem is that there is no algorithmic way of doing this since we cannot control the degrees in this version of the Positivstellensatz. Although the theorem is very important theoretically it is not satisfactory for our application. To get a certificate for positivity that is more suitable we need to impose further conditions on $K$. In the case when $K$ is compact Scm\"udgen managed to prove this version of the Positivstellensatz in 1991 \cite{Schmudgen1991}:
\begin{theorem}[Schm\"udgen's Positivstellensatz]
Let $K = \{ x \in \mathbb{R}^n | g_1(x)\geq 0, \dots, g_m(x) \geq 0\}$ be compact. If $f$ is strictly positive on $K$, then 
\[
f= \sum_{e \in \{0,1\}^m} \sigma_e g^e
\]
 where $\sigma_e$ is a sum of squares for all $e \in \{0,1\}^m$.
\end{theorem}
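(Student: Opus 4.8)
The plan is to deduce Schm\"udgen's theorem from the abstract Positivstellensatz (the previous theorem) by isolating the single place where compactness enters: the \emph{Archimedean} property of the preordering $T$. Recall that $T$ is called Archimedean if for every $p \in \mathbb{R}[x]$ there is an integer $N$ with $N - p \in T$. The proof then splits into two essentially independent parts: (i) compactness of $K$ forces $T$ to be Archimedean, and (ii) every Archimedean preordering $T$ already contains every polynomial that is strictly positive on $K$. Before starting I would dispose of the degenerate case $K = \emptyset$, where part (d) of the Positivstellensatz gives $-1 \in T$, hence $T = \mathbb{R}[x]$ and the conclusion is trivial; so from now on assume $K \neq \emptyset$.

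For part (i), since $K$ is compact it is bounded, so $\|x\|^2 = \sum_i x_i^2$ is bounded on $K$ and we may fix a constant $M$ with $M - \|x\|^2 > 0$ on $K$. Applying part (a) of the abstract Positivstellensatz to $M - \|x\|^2$ produces $p, q \in T$ with $p\,(M - \|x\|^2) = 1 + q$. From this identity I would extract a constant $N$ with $N - \|x\|^2 \in T$. Once $N - \|x\|^2 \in T$ is in hand, the full Archimedean property follows by soft algebra: the set of polynomials $a$ for which some $N \pm a \in T$ is a subring of $\mathbb{R}[x]$ containing $\mathbb{R}$, and it contains each $x_i$ (use $(x_i \mp 1)^2 \in T$ to pass from $N - x_i^2 \in T$ to $N' - x_i \in T$), hence it is all of $\mathbb{R}[x]$.

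For part (ii) I would argue by contradiction using a Kadison--Dubois--type representation theorem. Suppose $f > 0$ on $K$ but $f \notin T$. By Zorn's lemma enlarge $T$ to a preordering $P$ maximal among those not containing $f$; maximality forces $P$ to be total, meaning $a \in P$ or $-a \in P$ for every $a \in \mathbb{R}[x]$. The Archimedean property then makes $P$ induce an Archimedean ordering on the residue ring, which embeds in $\mathbb{R}$ and yields a character $\alpha \colon \mathbb{R}[x] \to \mathbb{R}$, that is, a point $\alpha \in \mathbb{R}^n$. Since each $g_i \in T \subseteq P$ we get $g_i(\alpha) \geq 0$, so $\alpha \in K$; since $f \notin P$ we have $-f \in P$, hence $f(\alpha) \leq 0$, contradicting $f > 0$ on $K$. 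Therefore $f \in T$, which is exactly the claimed representation.

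I expect the main obstacle to be the single algebraic step in part (i): turning the mixed-sign identity $p\,(M - \|x\|^2) = 1 + q$ into an honest membership $N - \|x\|^2 \in T$, since $p$ need not be a sum of squares and changes sign outside $K$. The representation theorem used in part (ii) is also nontrivial, but it is a clean and reusable black box once Archimedeanness is established. For completeness I would note the alternative route of Schm\"udgen's original proof, which replaces this algebra with functional analysis: one realizes the $g_i$ as commuting self-adjoint operators and invokes the spectral theorem together with the solution of the multidimensional moment problem, where compactness reappears as boundedness of the operators.
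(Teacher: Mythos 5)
The paper itself contains no proof of this theorem: it is quoted from \cite{Schmudgen1991}, and Schm\"udgen's original argument is precisely the operator-theoretic one you mention in closing (GNS/spectral theory plus the moment problem). So there is no internal proof to compare against, and your proposal must stand on its own. The route you choose is the standard purely algebraic proof (due to W\"ormann, and the one presented in \cite{Marshall2008}, which this paper cites for related facts): (i) compactness plus the classical Positivstellensatz forces the preordering $T$ to be Archimedean, and (ii) an Archimedean preordering contains every polynomial strictly positive on $K$, by a Kadison--Dubois/Jacobi representation theorem. The pieces you actually carry out are correct: the degenerate case $K=\emptyset$ via part (d) (indeed $-1\in T$ gives $h = \left(\frac{h+1}{2}\right)^2 + (-1)\left(\frac{h-1}{2}\right)^2 \in T$ for every $h$), the reduction of Archimedeanness to the single membership $N-\sum_i x_i^2\in T$ via the subring of bounded elements, and the outline of (ii), which is a genuine, citable theorem and legitimate to use as a black box (though note the Archimedean hypothesis is typically needed already to force totality of the maximal $P$, not only for the embedding into $\mathbb{R}$).

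The genuine gap is exactly the step you flag and then skip: producing $N-\sum_i x_i^2\in T$ from the identity $p\,(M-\sum_i x_i^2)=1+q$. This is W\"ormann's lemma, and it is the entire technical content of the algebraic proof of Schm\"udgen's theorem; everything else in your outline is soft. ``I would extract a constant $N$'' names no mechanism, and the obstruction you correctly identify --- $p$ is not a sum of squares and $M-\sum_i x_i^2$ changes sign --- is exactly why no direct manipulation of the identity works. To indicate the flavor of what is missing: multiplying the identity by $M-\sum_i x_i^2$ gives $(1+q)\bigl(M-\sum_i x_i^2\bigr)=p\,\bigl(M-\sum_i x_i^2\bigr)^2\in T$, which is true but insufficient; one must then eliminate the factor $1+q$, and the known arguments do this by a delicate induction exploiting the elementary fact that for every $h\in\mathbb{R}[x]$ there are $c>0$ and $e\in\mathbb{N}$ with $c\bigl(1+\sum_i x_i^2\bigr)^e\pm h$ a sum of squares; carrying this out takes roughly a page (see \cite{Marshall2008}). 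Until that lemma is proved, or cited as a named result with the same care you give the representation theorem in (ii), part (i) --- and hence the whole proof --- is incomplete.
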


This version is a major improvement, but in terms of computational efficiency it could be improved further as it is exponential in terms of the number of boundary conditions $g_i$. Two years later Putinar added an additional condition on the polynomials to make another version of the Positivstellensatz \cite{Putinar1993} that turns out to work well with semidefinite relaxations, and which is only linear in the number of boundary conditions $g_i$. Let us introduce the condition, state the theorem and then explore when the condition is satisfied.

\begin{definition}
Given polynomials $g_1,\dots,g_m \in \mathbb{R}[x_1,\dots,x_n]$, the \emph{quadratic module} generated by $g_1,\dots,g_m$ is defined by:
\[
\mathrm{QM}(g_1,\dots,g_m) = \{\sigma_0 + \sum_{i=1}^m \sigma_ig_i | \sigma_0,\dots,\sigma_m \textrm{ are sums of squares} \} . 
\]
\end{definition}

\begin{definition}
A quadratic module $\mathrm{QM}(g_1,\dots,g_m) $ is \emph{Archimedean} if
\[ N - \sum_{i=1}^n x_i^2 \in
\mathrm{QM}(g_1,\dots,g_m)
\]
for some $N \in \mathbb{N}$.
\end{definition}

\begin{theorem}[Putinar's Positivstellensatz]
\label{thm:Putinar}
Let 
\[
K = \{ x \in \mathbb{R}^n | g_1(x) \geq 0 , \dots , g_m(x) \geq 0\}
\]
be compact. If $f$ is strictly positive on $K$ and the associated quadratic module  $\mathrm{QM}(g_1,\dots,g_m) $ is Archimedean, then $f= \sigma_0 + \sum_{i=1}^m \sigma_i g_i$ where $\sigma_1,\dots,\sigma_m$ are sums of squares.
\end{theorem}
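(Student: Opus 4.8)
The plan is to argue by contradiction and to exploit the duality between the convex cone $M := \mathrm{QM}(g_1,\dots,g_m)$ and linear functionals on $\mathbb{R}[x]$. Assume $f$ is strictly positive on $K$ but $f \notin M$. The goal is to produce a probability measure supported on $K$ against which $f$ integrates to a nonpositive number, which is absurd since $f>0$ on the compact set $K$. The Archimedean hypothesis is what will force the relevant functional to be represented by a \emph{compactly supported} measure, and this is precisely the feature that distinguishes Putinar's theorem from the weaker general Positivstellensatz. (If $K=\emptyset$ the positivity statement is vacuous and the situation is instead governed by the emptiness criterion, so I would dispose of that case separately and assume $K\neq\emptyset$ throughout.)

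First I would record the algebraic consequence of the Archimedean condition: for every $p\in\mathbb{R}[x]$ there is an $N\in\mathbb{N}$ with $N\pm p\in M$. From $N-\sum_i x_i^2\in M$ one gets $N-x_i^2\in M$ for each $i$ by adding the sum of squares $\sum_{j\neq i}x_j^2$, and then $\tfrac12\big((x_i\pm1)^2+(N-x_i^2)\big)=\tfrac{N+1}{2}\pm x_i\in M$. The set of elements admitting such a two-sided bound is a subring of $\mathbb{R}[x]$ containing $1$ and every $x_i$, hence equals $\mathbb{R}[x]$. In particular $-1\notin M$, so $M$ is a proper convex cone.

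Next I would separate $f$ from $M$: since $M$ is a proper convex cone with $f\notin M$, a Hahn--Banach / Minkowski separation argument yields a nonzero linear functional $L:\mathbb{R}[x]\to\mathbb{R}$ with $L(p)\geq0$ for all $p\in M$ and $L(f)\leq0$. The bound $N-p\in M$ gives $L(p)\leq N\,L(1)$; were $L(1)=0$ this would force $L\equiv0$, so $L(1)>0$ and I may normalize to $L(1)=1$. Because every square belongs to $M$, the form $\langle p,q\rangle:=L(pq)$ is positive semidefinite. I expect this separation step — making the existence of $L$ rigorous, e.g. by verifying closedness of $M$ in a suitable locally convex topology or by separating on finite-dimensional truncations and passing to a limit with the help of the boundedness lemma — to be one of the two main obstacles.

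Finally I would run the Gelfand--Naimark--Segal construction. Quotient $\mathbb{R}[x]$ by $\{p:L(p^2)=0\}$ and complete to a Hilbert space $H$; multiplication by $x_i$ descends to a symmetric operator $X_i$ which is \emph{bounded}, since $N-x_i^2\in M$ yields $L(x_i^2p^2)\leq N\,L(p^2)$. As the $X_i$ commute, the spectral theorem furnishes a joint projection-valued measure $E$ on $\mathbb{R}^n$, compactly supported because the $X_i$ are bounded. The inequalities $L(g_ip^2)\geq0$ (valid since $g_ip^2\in M$) translate to $g_i(X)\succeq0$, which forces $E$, and hence the probability measure $\mu(\cdot):=\langle E(\cdot)\mathbf{1},\mathbf{1}\rangle$, to be supported in $\{g_i\geq0\}$ for each $i$, that is, on $K$. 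Identifying this support with $K$ is the second technical hurdle. Once it is in place, $L(f)=\int_K f\,d\mu\geq\big(\min_K f\big)\mu(K)>0$, contradicting $L(f)\leq0$. The contradiction shows $f\in M$, which is exactly the asserted representation $f=\sigma_0+\sum_{i=1}^m\sigma_i g_i$.
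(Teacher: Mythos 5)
There is nothing in the paper to compare you against: the paper states Putinar's Positivstellensatz as a quoted result, citing \cite{Putinar1993}, and gives no proof of it. Judged on its own merits, your proposal is a correct outline of the classical functional-analytic proof (essentially Putinar's original argument): use the Archimedean condition to bound every polynomial two-sidedly by the module $M=\mathrm{QM}(g_1,\dots,g_m)$, separate $f \notin M$ from $M$ by a linear functional $L$ with $L \geq 0$ on $M$ and $L(f)\leq 0$, run the GNS construction to obtain commuting bounded self-adjoint operators, and use the joint spectral measure to realize $L$ as integration against a probability measure supported on $K$, contradicting strict positivity of $f$ on the compact set $K$. Each claim you make along the way is true, and the two places you flag as hurdles are indeed where the work lies.

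Two remarks that close your flagged gaps. First, the separation step is easier than you fear: you need neither closedness of $M$ in any topology nor a truncation-and-limit argument. The two-sided bound you already derived (for every $p$ there is $N$ with $N \pm p \in M$) says exactly that $1$ is an \emph{algebraic interior point} of $M$: for $|t| \leq 1/N$ one has $1+tp = (1-|t|N)\cdot 1 + |t|(N \pm p) \in M$, and Eidelheit's separation theorem for convex sets possessing an internal point applies in the purely algebraic setting of $\mathbb{R}[x]$, producing $L$ at once. Second, the subring assertion for the bounded elements, which you state without proof, does require a small trick, because quadratic modules are \emph{not} closed under products: from $N \pm a \in M$ one gets $2N(N^2-a^2) = (N-a)^2(N+a) + (N+a)^2(N-a) \in M$, since each summand is a square times an element of $M$; hence $N^2 - a^2 \in M$, and then the identity $2ab = (a+b)^2 - a^2 - b^2$ reduces products to squares. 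Finally, your side remark about $K=\emptyset$ needs no separate treatment, and the emptiness criterion you invoke would not give it to you anyway (part (d) of the Positivstellensatz certifies $-1$ in the \emph{preordering} $T$, not in the quadratic module $M$): your own argument terminates with a probability measure supported on $K$, which is already absurd when $K$ is empty, so the contradiction goes through uniformly without assuming $K \neq \emptyset$.
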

Putinar asked the question \emph{Assuming $K$ is compact, is it true that $\mathrm{QM}$ is Archimedean?} which would then allow us to remove this additional condition from the theorem.
It is true and easy to show when $m=1$, and is highly nontrivial but still true when $m=2$, which was proven by Jacobi and Prestel in 2001 \cite{Jacobi2001}. It is also true if the ring $\frac{\mathbb{R}[x_1,\dots,x_n]}{M \cap -M}$ has Krull dimension $\leq 1$, but in general it is not true if $m \geq 3$ and the Krull dimension is 2 or more. The different cases are discussed in further detail starting at page 97 of \ref{Marshall2008}.

For the general case we provide the following conditions due to Schm\"udgen in 1991 \cite{Schmudgen1991} that are equivalent to the Archimedean condition:
\begin{theorem}
\label{Thm:Schmudgen1991}
The following are equivalent:
\begin{itemize}
\item[(1)] $\mathrm{QM}(g_1,\dots,g_m)$ is Archimedean.
\item[(2)] There exist finitely many polynomials $t_1,\dots,t_m \in \mathrm{QM}(g_1,\dots,g_m)$ such that the set
\[
\{ x \in \mathbb{R}^n | t_1(x) \geq 0, \dots, t_m(x) \geq 0 \}
\]
is compact and $\prod_{i \in I} t_i \in \mathrm{QM}(g_1,\dots,g_m)$ for all $I \subset \{1, \dots, m\}$
\item[(3)] There exists a polynomial $p \in \mathrm{QM}(g_1,\dots,g_m)$ such that $\{x \in \mathbb{R}^n | p(x) \geq 0 \}$ is compact.
\end{itemize}
\end{theorem}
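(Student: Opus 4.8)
The plan is to establish the equivalences cyclically as $(1)\Rightarrow(3)\Rightarrow(2)\Rightarrow(1)$, writing $M:=\mathrm{QM}(g_1,\dots,g_m)$ for brevity and using throughout that a quadratic module contains every sum of squares and is closed under addition and under multiplication by a sum of squares. Two of the three arrows are essentially immediate, so all the real work lies in $(2)\Rightarrow(1)$.

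For $(1)\Rightarrow(3)$, if $M$ is Archimedean then by definition $p:=N-\sum_{i=1}^n x_i^2\in M$ for some $N\in\mathbb{N}$, and $\{x\in\mathbb{R}^n : p(x)\ge 0\}$ is the closed ball of radius $\sqrt N$, hence compact. For $(3)\Rightarrow(2)$, given a single $p\in M$ with $\{p\ge 0\}$ compact, I would simply take the one-element family $t_1:=p$. The compactness requirement of $(2)$ then reads exactly $\{p\ge 0\}$, and the product condition is automatic: the only subsets of $\{1\}$ produce the empty product $1\in M$ and $p\in M$ itself. It is worth stressing that it is this direction, and not $(2)\Rightarrow(3)$, that is trivial — merging several $t_i$ into one polynomial with compact positivity locus is not obvious — which is why the cycle is routed through $(2)\Rightarrow(1)$.

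The heart of the argument is $(2)\Rightarrow(1)$, where I would invoke Schm\"udgen's Positivstellensatz on the compact set $\{t_1\ge 0,\dots,t_m\ge 0\}$. Since this set is bounded, choosing $N$ larger than the maximum of $\sum_i x_i^2$ on it makes $f:=N-\sum_{i=1}^n x_i^2$ strictly positive there. Schm\"udgen then yields
\[
N-\sum_{i=1}^n x_i^2=\sum_{e\in\{0,1\}^m}\sigma_e\, t^e,\qquad t^e=\prod_{i:\,e_i=1}t_i,
\]
with each $\sigma_e$ a sum of squares. The decisive point is that every factor $t^e$ is a product $\prod_{i\in I}t_i$ over some $I\subseteq\{1,\dots,m\}$, which lies in $M$ precisely by the product hypothesis of $(2)$; hence $\sigma_e t^e\in M$ and the whole sum lies in $M$. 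This gives $N-\sum_i x_i^2\in M$, i.e. $M$ is Archimedean.

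The main obstacle, and the reason condition $(2)$ is phrased the way it is, is exactly the gap between Schm\"udgen's conclusion and membership in $M$: Schm\"udgen only certifies that a strictly positive polynomial lies in the \emph{preordering} generated by $t_1,\dots,t_m$, which may be strictly larger than the quadratic module $M$. The hypothesis $\prod_{i\in I}t_i\in M$ is precisely what pulls each preordering term back into $M$. In the degenerate case $\{t_i\ge 0\}=\emptyset$ one instead uses part $(d)$ of the Positivstellensatz to get $-1\in M$, whence $M=\mathbb{R}[x]$ is trivially Archimedean. Finally I would note that $(1)\Rightarrow(2)$ can also be seen directly by taking all $t_i=p=N-\sum_i x_i^2$: the needed products $p^{|I|}$ automatically lie in $M$, since even powers are sums of squares and odd powers are a square times $p$, which is the special feature making the single-generator situation so well behaved.
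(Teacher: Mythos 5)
The paper itself offers no proof of this theorem: it is stated as a known result with a citation to Schm\"udgen's 1991 paper, so there is nothing internal to compare your argument against. Judged on its own merits, your proof is correct, and it is essentially the standard argument. The cycle $(1)\Rightarrow(3)\Rightarrow(2)\Rightarrow(1)$ is well chosen: the first two arrows are exactly as trivial as you claim, and the substance lies in $(2)\Rightarrow(1)$, where you correctly isolate the real issue --- Schm\"udgen's Positivstellensatz certifies membership of $N-\sum_i x_i^2$ only in the \emph{preordering} generated by $t_1,\dots,t_m$, and the product hypothesis of $(2)$ is exactly what embeds that preordering into $\mathrm{QM}(g_1,\dots,g_m)$, since a quadratic module is closed under addition and under multiplication by sums of squares. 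Your separate treatment of the degenerate case $\{t_1\ge 0,\dots,t_m\ge 0\}=\emptyset$ via part (d) of the Positivstellensatz is genuinely needed (the ``choose $N$ bigger than the maximum'' step is vacuous on the empty set) and is handled correctly, granting that $-1\in M$ forces $M=\mathbb{R}[x]$ via $f=\left((f+1)/2\right)^2+(-1)\left((f-1)/2\right)^2$. Two cosmetic points: the theorem as stated reuses the letter $m$ for the number of $t_i$'s, so if one reads it as demanding exactly $m$ polynomials, your one-element family in $(3)\Rightarrow(2)$ should be padded, e.g.\ $t_1=p$ and $t_2=\dots=t_m=1$, which changes nothing; and the product condition must be read as ranging over all $I\subseteq\{1,\dots,m\}$ including the full index set (the paper writes $I\subset\{1,\dots,m\}$), which is the intended meaning and is what your application of Schm\"udgen's theorem requires.
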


We have introduced enough state-of-the-art tools to find sum of squares based certificates to check positivity of polynomials. Recall that our goal is to solve polynomial optimization problems:
\[ 
\begin{array}{rll}
\inf  & \displaystyle  f(x) \\
\textnormal{subject to} & \displaystyle  g_1(x) \geq 0, \dots, g_m(x) \geq 0, \\
& \displaystyle  x \in \mathbb{R}^n.
\end{array}
\]
Let
\[ K = \{x \in \mathbb{R}^n : g_j(x) \geq 0, j = 1,\dots,m\}
\]
denote the feasible set to the polynomial optimization problem. Sometimes we impose the following technical assumption:
\begin{assumption}
\label{ass:arch}
Let us assume that there exists a polynomial $\sigma \in \mathbb{R}[x_1,\dots,x_n]$ on the form
\[
\sigma=\sigma_0+\sum_{j=1}^m \sigma_jg_j,
\]
where $\sigma_i$ is a sum of squares for $i=1,\dots,m$, such that the set $\{ x \in \mathbb{R}^n | \sigma(x) \geq 0\}$ is compact.
\end{assumption}
As the assumption is equivalent to part (3) of Theorem \ref{Thm:Schmudgen1991} it guarantees that quadratic module generated by polynomials $g_1,\dots,g_m$ is Archimedean, and thus that Putinar's Positivstellensatz can be applied. When assumption \ref{ass:arch} holds it is clear that the following problem has the same optimal value as our original polynomial optimization problem:
\[
\begin{array}{rl}
\sup & \displaystyle  \lambda \\
\textnormal{subject to} & \displaystyle  f(X)-\lambda = \sigma_0 + \sum_{i=1}^m \sigma_ig_i \\
& \displaystyle  \sigma_0,\dots,\sigma_m \textrm{ are sums of squares}.
\end{array}
\]
In particular, $\lambda$ is now the variable and $X$ a formal indeterminate. Next we will explore the duality between these problems, and discuss how we can relax them to find lower bounds for polynomials on $K$.

%%%%%%%%%%%%%%%%%%%%
%    The moment problem and duality   %
%%%%%%%%%%%%%%%%%%%%
\section{The moment problem and duality}
\label{sec:Mom}
To fully understand how to solve polynomial optimization problems we need to introduce a duality theory, and to introduce a duality theory we first need to study the moment problem. The problem we are interested in is a special version of the \emph{Generalized Moment Problem} (GMP). In its general form, a lot of problems from applied mathematics, statistics, probability theory, economics, engineering, physics and operation research can be stated in terms of a moment problem. Although the GMP provides a nice theoretical framework and has great modeling power, it cannot in its full generality be solved numerically. We do not state the theorems in their most general form, but rather in our context of minimizing a polynomial over a closed basic semialgebraic set. We refer to \cite{Lasserre2010} for the reader who is interested in a more complete background.

In our context $f,g_1,\dots,g_m$ are polynomial from $\mathbb{R}^n$ to $\mathbb{R}$ and $K = \{g_1\geq 0,\dots,g_m \geq 0\}$ a compact closed basic semialgebraic set. This is a particular instance of the GMP (Theorem 1.1 page 6, \cite{Lasserre2010}).
\begin{theorem}
Let  $\mathscr{M}(K)_+$ be the space of finite Borel measures $\mu$ on $K$. The following two problems are equivalent in the sense that $f^* = \rho_{\textnormal{mom}}$.
\begin{equation} 
\label{eq:prim1}
\begin{array}{rll}
f^* = \displaystyle  \inf  & \displaystyle  f(x) \\
 \textnormal{subject to} & \displaystyle  x \in K.
\end{array}
\end{equation}
\begin{equation}
\label{eq:prim2}
\begin{array}{rl}
\rho_{\textnormal{mom}} = \displaystyle \inf_{\mu \in \mathscr{M}(K)_+} & \displaystyle \int_K f d\mu \\
 \textnormal{subject to} & \displaystyle \int_K d\mu=1.
\end{array}
\end{equation}
\end{theorem}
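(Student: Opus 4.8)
The plan is to prove the equality $f^* = \rho_{\textnormal{mom}}$ by establishing the two inequalities $\rho_{\textnormal{mom}} \le f^*$ and $\rho_{\textnormal{mom}} \ge f^*$ separately. Before doing so I would record the key structural facts: since $f$ is a polynomial, hence continuous, and $K$ is compact, the extreme value theorem guarantees that $f$ attains its minimum on $K$ at some point $x^* \in K$, so $f^* = f(x^*)$ is finite and attained. Compactness and continuity also make $f$ bounded on $K$, so that for every finite measure $\mu \in \mathscr{M}(K)_+$ the integral $\int_K f\, d\mu$ is well defined and finite.

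For the inequality $\rho_{\textnormal{mom}} \le f^*$ I would exhibit an explicit feasible measure for problem (\ref{eq:prim2}) whose objective value equals $f^*$. The natural candidate is the Dirac measure $\delta_{x^*}$ concentrated at the minimizer $x^*$. It is a finite Borel measure supported on $K$ with total mass $\int_K d\delta_{x^*} = 1$, so it satisfies the normalization constraint and is feasible, and its objective value is $\int_K f\, d\delta_{x^*} = f(x^*) = f^*$. Since $\rho_{\textnormal{mom}}$ is an infimum over all feasible measures, this single feasible point already yields $\rho_{\textnormal{mom}} \le f^*$.

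For the reverse inequality $\rho_{\textnormal{mom}} \ge f^*$ I would use that $f^*$ is a pointwise lower bound for $f$ on $K$, that is, $f(x) \ge f^*$ for every $x \in K$ by definition of the infimum. For any feasible probability measure $\mu$, monotonicity of the integral together with the normalization constraint gives $\int_K f\, d\mu \ge \int_K f^*\, d\mu = f^* \int_K d\mu = f^*$. Taking the infimum over all feasible $\mu$ then gives $\rho_{\textnormal{mom}} \ge f^*$, and combining the two inequalities proves $f^* = \rho_{\textnormal{mom}}$.

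I do not expect a genuine obstacle here, since the argument is essentially a two-inequality sandwich; the only place requiring care is the appeal to compactness. It is compactness together with continuity that guarantees the minimizer $x^*$ exists, so that the Dirac measure actually \emph{achieves} $f^*$ rather than merely approximating it, and it is the normalization constraint $\int_K d\mu = 1$ that is precisely what converts the pointwise bound $f \ge f^*$ into a lower bound on the expectation. No deeper machinery from the moment problem or the Positivstellensatz is needed for this particular equivalence; those tools become essential only later, when one passes from measures to their moment sequences and relaxes (\ref{eq:prim2}) to a semidefinite program.
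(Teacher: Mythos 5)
Your proof is correct: the Dirac measure at a minimizer (which exists by compactness of $K$ and continuity of $f$) gives $\rho_{\textnormal{mom}} \le f^*$, and integrating the pointwise bound $f \ge f^*$ against any feasible probability measure gives the reverse inequality. The paper itself offers no proof of this statement --- it simply cites Theorem 1.1 of Lasserre's book --- and your two-inequality argument is precisely the standard proof given there, so there is nothing to reconcile.
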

Since \eqref{eq:prim2} is a linear program we find its dual through the standard procedure in linear programming:
\begin{equation}
\label{eq:dual1}
\begin{array}{rl}
\rho_{\textnormal{pop}} = \displaystyle \sup & \lambda \\
 \textnormal{subject to} & \displaystyle  f(x) - \lambda \geq 0 \textrm{ for all } x \in K.
\end{array}
\end{equation}

In general strong duality holds when we Slater's condition is satisfied. In our case, when we are minimizing a polynomial over a compact feasible set $K$, we do not need Slater's condition for strong duality to hold (Special case of Theorem 1.3 page 8, \cite{Lasserre2010}):
\begin{theorem}
\label{thm:duality}
Suppose that $K$ is compact, that $f \in \mathbb{R}[x]$ and that we have the primal and dual pair \eqref{eq:prim2} and \eqref{eq:dual1}. Then $\rho_{mom} = \rho_{pop}$ and if \eqref{eq:prim2} has a feasible solution then it has an optimal solution.
\end{theorem}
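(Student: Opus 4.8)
The plan is to exploit the fact that in this particular instance the dual problem \eqref{eq:dual1} is simple enough to be solved explicitly, and then to chain the resulting identity with the preceding theorem, which already establishes $f^* = \rho_{\textnormal{mom}}$ for $f^* = \inf_{x \in K} f(x)$. Most of the genuine content is thus already absorbed into that earlier result, and the remaining work is essentially bookkeeping.

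First I would evaluate $\rho_{\textnormal{pop}}$ directly. The constraint in \eqref{eq:dual1} requires $f(x) - \lambda \geq 0$ for every $x \in K$, which says exactly that $\lambda$ is a lower bound of $f$ on $K$. Taking the supremum over all such $\lambda$ therefore returns the greatest lower bound, so that
\[
\rho_{\textnormal{pop}} = \sup\{\lambda \in \mathbb{R} : \lambda \leq f(x) \text{ for all } x \in K\} = \inf_{x \in K} f(x) = f^*.
\]
Combining this with the identity $f^* = \rho_{\textnormal{mom}}$ from the previous theorem immediately gives $\rho_{\textnormal{mom}} = f^* = \rho_{\textnormal{pop}}$, which is the asserted strong duality. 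As a consistency check, weak duality $\rho_{\textnormal{mom}} \geq \rho_{\textnormal{pop}}$ is also transparent: for any feasible $\mu$ and any feasible $\lambda$ one has $\int_K f \, d\mu \geq \lambda \int_K d\mu = \lambda$.

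For attainment, suppose \eqref{eq:prim2} has a feasible solution; then a probability measure supported on $K$ exists, so $K$ is nonempty. Since $K$ is compact and $f$, being a polynomial, is continuous, the extreme value theorem yields a minimizer $x^* \in K$ with $f(x^*) = f^*$. I would then exhibit the Dirac measure $\delta_{x^*}$ as an explicit optimizer: it is feasible for \eqref{eq:prim2} and satisfies $\int_K f \, d\delta_{x^*} = f(x^*) = f^* = \rho_{\textnormal{mom}}$, so the infimum defining $\rho_{\textnormal{mom}}$ is attained.

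The step carrying the real weight is the extreme value theorem, which simultaneously guarantees that $f^*$ is finite and produces the point $x^*$ used to build the optimal measure; there is no need to invoke general conic duality or Slater-type conditions. The only case requiring separate attention is $K = \emptyset$, in which \eqref{eq:prim2} is infeasible so the attainment claim is vacuous, while $\rho_{\textnormal{mom}} = \rho_{\textnormal{pop}} = +\infty$ under the usual conventions, so the equality still holds.
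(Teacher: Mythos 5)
Your proof is correct. Note, though, that the paper itself gives no proof of this theorem: it quotes it as a special case of Theorem 1.3, p.~8 of \cite{Lasserre2010}, where the result is established for the generalized moment problem by an infinite-dimensional duality argument (Riesz representation of positive functionals on $C(K)$ together with weak-$\ast$ compactness of the feasible set of measures, both resting on compactness of $K$). Your route is genuinely different and more elementary, and it works precisely because this instance of the GMP has the single constraint $\int_K d\mu = 1$: the dual variable is then one scalar $\lambda$, so \eqref{eq:dual1} collapses to the tautology that the supremum of all lower bounds of $f$ on $K$ equals $\inf_{x\in K} f(x) = f^*$; chaining this with the identity $f^* = \rho_{\textnormal{mom}}$ of the preceding theorem yields $\rho_{\textnormal{mom}} = \rho_{\textnormal{pop}}$, and the extreme value theorem plus the Dirac measure $\delta_{x^*}$ replaces the weak-$\ast$ compactness argument for attainment (this is the only place where compactness of $K$ is actually used, and it is indispensable). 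What you gain is a short, self-contained proof requiring nothing beyond the preceding theorem and undergraduate analysis; what you give up is generality: with several moment constraints the dual of the GMP is no longer one-dimensional, the ``sup of lower bounds'' reading of the dual disappears, and one genuinely needs the functional-analytic machinery of the cited source. Your handling of the degenerate case $K = \emptyset$ (both values $+\infty$) is likewise consistent with the conventions used here.
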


Let $v_r(x) = [1,x_1,\dots,x_n,x_1x_2, \dots, x_n^2,\dots,x_1\cdots x_r, \dots,x_n^r]^T$ be the vector of all monomials up to degree $r$ (order first reversed graded lexicographically within groups with same set of exponentials, then the groups are ordered reversed graded lexicographically by leading terms in the groups).  Let similarly $v(x)$ be the infinite sequence of monomials $v(x) = v_{r \rightarrow \infty}(x)$. Let $y = (y_\alpha) \subset \mathbb{R}$ be an infinite sequence indexed by $\alpha \in \mathbb{N}^n$ ordered the same way as the monomials in $v_r$, and define $L_y : \mathbb{R}[x] \rightarrow \mathbb{R}$ by
\begin{equation}
\label{def:L_y}
f(x) = \displaystyle \sum_{\alpha \in \mathbb{N}^n} f_\alpha x^\alpha \mapsto L_y(f) = \sum_{\alpha \in \mathbb{N}^n} f_\alpha y_\alpha.
\end{equation}
Let ${\bf M}_r(y)$ be the \emph{moment matrix} defined by
\begin{equation}
\label{def:M_r}
{\bf M}_r(y)(\alpha,\beta) = L_y(x^\alpha x^\beta)=y_{\alpha+\beta}, \textrm{ for all } \alpha,\beta \in \mathbb{N}_r^n.
\end{equation}
Equivalently ${\bf M}_r(y) = L_y(v_r(x)v_r(x)')$, which is why we say that the rows and columns of ${\bf M}_r(y)$ are labeled by $v_r(x)$.

Given a polynomial $u \in \mathbb{R}[x]$ with coefficient vector ${\bf u} = \{ 
g \}$ we define the \emph{localizing matrix} to be the matrix ${\bf M}_r(uy)$, obtained from ${\bf M}_r(y)$ by:
\[ 
{\bf M}_r(uy)(\alpha, \beta) = L_y(u(x)x^\alpha x^\beta) = \displaystyle \sum_{g \in \mathbb{N}^n} u_g y_{g+\alpha+\beta}, \textrm{ for all } \alpha, \beta \in \mathbb{N}_r^n.
\]

To understand the notation let us consider two examples.
\begin{example}
When $n=2$, $r=1$ and $u(x) = a-x_1^2+bx_2^2$ we have 
\[v_r(x)=(1,x_1,x_2),\] 
\[
y=\big(y_{00},y_{10},y_{01},y_{11},y_{20},y_{02}, \dots \big), 
\]
\[
{\bf M}_1(y) = 
\left[\begin{array}{cccccc}
y_{00} & y_{10} & y_{01}  \\
y_{10} & y_{20} & y_{11}  \\
y_{01} & y_{11} & y_{02}
\end{array} \right],
\]
and
\[
{\bf M}_1(uy) = 
\left[\begin{array}{cccccc}
ay_{00}-y_{20}+by_{02} & ay_{10}-y_{30}+by_{12} & ay_{01}-y_{21}+by_{03}  \\
ay_{10}-y_{30}+by_{12} & ay_{20}-y_{40}+by_{22} & ay_{11}-y_{31}+by_{13}  \\
ay_{01}-y_{21}+by_{03} & ay_{11}-y_{31}+by_{13} & ay_{02}-y_{22}+by_{04}
\end{array} \right],
\]
\end{example}
\begin{example}
When $n=2$, $r=2$ and $u=ax_1^3$ we have 
\[v_r(x)=(1,x_1,x_2,x_1x_2,x_1^2,x_2^2),\] 
\[
y=\big(y_{00},y_{10},y_{01},y_{11},y_{20},y_{02},y_{21},y_{12},y_{30},y_{03},y_{22}, y_{31},y_{13},y_{40},y_{04}, \dots \big), 
\]
\[
{\bf M}_2(y) = 
\left[\begin{array}{cccccc}
y_{00} & y_{10} & y_{01} & y_{11} & y_{20} & y_{02} \\
y_{10} & y_{20} & y_{11} & y_{21} & y_{30} & y_{12} \\
y_{01} & y_{11} & y_{02} & y_{12} & y_{21} & y_{03} \\
y_{11} & y_{21} & y_{12} & y_{22} & y_{31} & y_{13} \\
y_{20} & y_{30} & y_{21} & y_{31} & y_{40} & y_{22} \\
y_{02} & y_{12} & y_{03} & y_{13} & y_{22} & y_{04} \\
\end{array} \right]
\]
and
\[
{\bf M}_2(uy) = 
\left[\begin{array}{cccccc}
ay_{30} & ay_{40} & ay_{31} & ay_{41} & ay_{50} & ay_{32} \\
ay_{40} & ay_{50} & ay_{41} & ay_{51} & ay_{60} & ay_{42} \\
ay_{31} & ay_{41} & ay_{32} & ay_{42} & ay_{51} & ay_{33} \\
ay_{41} & ay_{51} & ay_{42} & ay_{52} & ay_{61} & ay_{43} \\
ay_{50} & ay_{60} & ay_{51} & ay_{61} & ay_{70} & ay_{52} \\
ay_{32} & ay_{42} & ay_{33} & ay_{43} & ay_{52} & ay_{34} \\
\end{array} \right].
\]
\end{example}

The following theorem, which is due to Haviland and Riesz, is one of the main building blocks to many proofs in duality theory (Theorem 3.1 page 53, \cite{Lasserre2010}):
\begin{theorem}
\label{thm:Haviland}
Let $y=(y_\alpha)_{\alpha \in \mathbb{N}^n} \subset \mathbb{R}$ be an infinite sequence, and let $K \subset \mathbb{R}^n$ be closed. There exists a finite Borel measure $\mu$ on $K$ such that
\[
\displaystyle \int_K x^\alpha d\mu = y_\alpha, \textrm{ for all } \alpha \in \mathbb{N}^n
\]
if and only if $L_y(p) \geq 0$ for all polynomials $p \in \mathbb{R}[x]$ that are nonnegative on $K$.
\end{theorem}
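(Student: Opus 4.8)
This is the classical Riesz--Haviland theorem, so I would prove the two implications separately; the forward (necessity) direction is immediate, while the converse (sufficiency) direction carries all the weight.

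First I would dispose of the ``only if'' direction. Assume a representing measure $\mu$ exists, and let $p = \sum_\alpha p_\alpha x^\alpha \in \mathbb{R}[x]$ be nonnegative on $K$. Using the definition of $L_y$, the fact that $y_\alpha = \int_K x^\alpha\,d\mu$, and linearity of the integral,
\[
L_y(p) = \sum_\alpha p_\alpha y_\alpha = \sum_\alpha p_\alpha \int_K x^\alpha\,d\mu = \int_K p\,d\mu \geq 0 ,
\]
where interchanging the finite sum with the integral is legitimate because $p$ has finitely many terms, and the final inequality holds since $p \geq 0$ on $K$ and $\mu$ is a nonnegative measure.

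For the ``if'' direction the plan is to realize $L_y$ as integration against a measure by first extending it to a \emph{positive} linear functional on a space of continuous functions and then invoking the Riesz representation theorem. I regard $\mathbb{R}[x]$ as a subspace $F$ of a vector space $E$ of continuous functions on $K$, and let $C \subset E$ be the convex cone of functions that are nonnegative on $K$; the hypothesis says precisely that $L_y \geq 0$ on $F \cap C$. The key tool is the M.~Riesz extension theorem: provided the domination condition $E = F + C$ holds---that is, every $h \in E$ is a polynomial plus a function nonnegative on $K$---the functional $L_y$ extends to a linear functional $\tilde L$ on $E$ that is still nonnegative on $C$. A positive functional on continuous functions is represented by integration against a finite positive Borel measure $\mu$ on $K$, and because $\tilde L$ restricts to $L_y$ we recover $\int_K x^\alpha\,d\mu = L_y(x^\alpha) = y_\alpha$ for every $\alpha$, with finite total mass $\mu(K) = \tilde L(1) = y_0$.

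When $K$ is compact---the case relevant to the optimization setting of this paper---every step is clean: take $E = C(K)$, and note that each $h$ is bounded, so writing $h = -\|h\|_\infty + (h + \|h\|_\infty)$ exhibits it as a constant polynomial plus a nonnegative function and yields $E = F + C$ for free; moreover positivity of $\tilde L$ automatically forces boundedness (with norm $\tilde L(1)$), so the Riesz representation theorem applies directly and produces a measure supported on $K$. The main obstacle is the general closed, possibly unbounded, $K$: there $C(K)$ contains unbounded functions, constants no longer dominate, and one must prevent mass from escaping to infinity. I would address this by passing to a compactification of $K$ (for instance the one-point compactification, or an embedding into a ball), extending $L_y$ and representing it by a measure there, and then showing that the represented measure charges none of the added points, so that it descends to a finite measure genuinely supported on $K$. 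Verifying this absence of mass at infinity, together with setting up a function space on which both the extension and the representation theorems apply, is the technical heart of the argument.
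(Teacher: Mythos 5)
The paper does not actually prove this statement: it is imported verbatim from Lasserre's book (Theorem 3.1, p.~53 of \cite{Lasserre2010}), so there is no in-paper argument to compare yours against, and your proposal must be judged on its own merits. Your necessity direction is correct, and your compact case is essentially complete: the decomposition $h = -\|h\|_\infty + (h+\|h\|_\infty)$ does give $E = F + C$, positivity of $\tilde L$ does force boundedness with norm $\tilde L(1) = y_0$, and the Riesz representation theorem on $C(K)$ finishes. (One small point you gloss over: the restriction map $\mathbb{R}[x] \to C(K)$ need not be injective --- take $K$ finite --- so to ``regard $\mathbb{R}[x]$ as a subspace $F$ of $E$'' you must check that $L_y$ vanishes on polynomials vanishing identically on $K$; this follows by applying the hypothesis to both $p-q$ and $q-p$.) Since every use of this theorem in the paper assumes $K$ compact, this much already covers the paper's needs.

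However, the theorem is stated for arbitrary closed $K$, and there your proposal has a genuine gap, which you yourself flag as ``the technical heart'' but do not carry out. Worse, the specific route you propose --- one-point compactification, or embedding into a ball --- hits an obstacle before the question of mass at infinity even arises: polynomials are unbounded on a non-compact $K$, so they do not extend to continuous functions on any compactification of $K$; there is no functional on $C(\hat K)$ to extend or to represent until you first replace $\mathbb{R}[x]$ by a transformed algebra of bounded functions (for instance $p(x)(1+|x|^2)^{-k}$ with $\deg p \le 2k$), and once you do, ruling out mass on the ideal boundary is exactly where the positivity hypothesis on nonnegative polynomials must be exploited --- none of which is sketched. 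The standard proof (Haviland's, as in Marshall's or Lasserre's books) avoids compactification altogether: apply the M.~Riesz extension theorem with $E$ taken to be the continuous functions of polynomial growth on $K$, where $E = F + C$ holds essentially by definition of $E$; restrict the extension $\tilde L$ to $C_c(K)$ and obtain $\mu$ from the Riesz representation theorem; then prove $\int_K p\, d\mu = L_y(p)$ for every polynomial $p$ by monotone approximation from below by compactly supported functions together with a domination argument using $(1+|x|^2)^k$. That final identification step is the real content of the non-compact case, and it is absent from your proposal.
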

One of the theorems based on Theorem  \ref{thm:Haviland} is the following  (Theorem 3.8 page 63, \cite{Lasserre2010}):
\begin{theorem}
\label{thm:dualpositivstellensatz}
Let $y = (y_\alpha)_{\alpha \in \mathbb{N}^n}$ be a given infinite sequence in $\mathbb{R}$, introduce the polynomials $g_1,\dots,g_m \in \mathbb{R}[x]$ and let $K \subset \mathbb{R}^n$ be the closed basic semi-algebraic set
\[ 
K = \{ x \in \mathbb{R}^n | g_i(x) \geq 0, i = 1,\dots, m \}.
\]
Assume that $K$ is compact.
\begin{itemize}
\item[(a)] There exists a finite Borel measure $\mu$ on $K$ such that $\displaystyle \int_K x^\alpha d\mu = y_\alpha$ for all $\alpha \in \mathbb{N}^n$ if and only if ${\bf M}_r(g_Jy) \succeq 0$ for all $J \subseteq \{1, \dots, m\}$ and all $r \in \mathbb{N}$. 
\item[(b)] Assume that Assumption \ref{ass:arch} holds, then there exists a finite Borel measure $\mu$ on $K$ such that $\displaystyle \int_K x^\alpha d\mu = y_\alpha$ for all $\alpha \in \mathbb{N}^n$ if and only if ${\bf M}_r(g_jy) \succeq 0$ and ${\bf M}_r(y) \succeq 0$ for all $j \in \{ 1, \dots, m\}$ and all $r \in \mathbb{N}$.
\end{itemize}
\end{theorem}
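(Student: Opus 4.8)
The plan is to derive both equivalences from the Haviland--Riesz criterion (Theorem \ref{thm:Haviland}), using the two Positivstellensätze to control which polynomials count as nonnegative on $K$. The bridge between the matrix conditions and the functional $L_y$ is the elementary identity that, for any polynomial $u$ and any coefficient vector $q=(q_\alpha)$ of a polynomial $q(x)$ of degree at most $r$,
\[
q^T {\bf M}_r(uy)\, q = \sum_{\alpha,\beta} q_\alpha q_\beta\, L_y\!\left(u\, x^\alpha x^\beta\right) = L_y\!\left(u\, q^2\right).
\]
Thus ${\bf M}_r(uy)\succeq 0$ for all $r$ is exactly the statement that $L_y(u\,q^2)\ge 0$ for every polynomial $q$. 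Throughout I write $g_J=\prod_{i\in J}g_i$, with $g_\emptyset=1$, matching the products $g^e$ appearing in Schm\"udgen's theorem.

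First I would dispatch the forward (easy) direction of both parts. If a representing measure $\mu$ on $K$ exists, then $L_y(f)=\int_K f\,d\mu$ for every polynomial $f$, so for any $q$,
\[
q^T {\bf M}_r(g_J y)\, q = L_y\!\left(g_J\, q^2\right) = \int_K g_J(x)\, q(x)^2\, d\mu \ge 0,
\]
because each $g_i\ge 0$ on $K$ forces $g_J\ge 0$ on $K$ while $q^2\ge 0$ everywhere. This yields ${\bf M}_r(g_J y)\succeq 0$ for all $J$ and $r$ in (a), and the special cases $J=\emptyset$ and $J=\{j\}$ give ${\bf M}_r(y)\succeq 0$ and ${\bf M}_r(g_j y)\succeq 0$ in (b).

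For the reverse direction of (a) I would invoke Haviland--Riesz: it suffices to prove $L_y(p)\ge 0$ for every $p$ nonnegative on $K$. I treat the \emph{strictly} positive case first. Since $K$ is compact, Schm\"udgen's Positivstellensatz writes $p=\sum_J \sigma_J\, g_J$ with each $\sigma_J$ a sum of squares, say $\sigma_J=\sum_k q_{J,k}^2$. Taking $r$ larger than every degree appearing, the identity gives
\[
L_y(p) = \sum_J L_y(\sigma_J g_J) = \sum_J \sum_k q_{J,k}^T {\bf M}_r(g_J y)\, q_{J,k} \ge 0,
\]
using ${\bf M}_r(g_J y)\succeq 0$. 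For merely nonnegative $p$, apply this to $p+\varepsilon$ (strictly positive on $K$) to get $L_y(p)\ge -\varepsilon\, L_y(1)$, and let $\varepsilon\to 0$; here $L_y(1)={\bf M}_0(y)\ge 0$ is the $J=\emptyset$, $r=0$ instance. Part (b) is identical except that Assumption \ref{ass:arch} makes the quadratic module Archimedean, so Putinar's Positivstellensatz (Theorem \ref{thm:Putinar}) supplies the sparser representation $p=\sigma_0+\sum_j \sigma_j g_j$, whose value $L_y(p)=\sum_k q_{0,k}^T {\bf M}_r(y)\,q_{0,k}+\sum_j\sum_k q_{j,k}^T {\bf M}_r(g_j y)\,q_{j,k}$ is nonnegative using only ${\bf M}_r(y)\succeq 0$ and the ${\bf M}_r(g_j y)\succeq 0$.

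The only genuinely deep inputs are the Positivstellensätze and Haviland--Riesz, all quoted above; the rest is the bookkeeping identity and the $\varepsilon$-perturbation. The main obstacle I anticipate is one of care rather than difficulty: ensuring that for each fixed positivity certificate the truncation level $r$ is taken large enough to accommodate every $\sigma_J$ (equivalently every $q_{J,k}$), which is precisely why the hypothesis quantifies over \emph{all} $r\in\mathbb{N}$, and confirming that the passage from strict positivity to nonnegativity needs no Slater-type hypothesis here because $K$ is compact.
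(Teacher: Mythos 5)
The paper gives no proof of this theorem at all; it is quoted directly from \cite{Lasserre2010} (Theorem 3.8 there), and your argument is precisely the standard proof underlying that source. It is correct as written: the identity $q^T{\bf M}_r(uy)\,q = L_y(u\,q^2)$ reduces the positive-semidefiniteness hypotheses to positivity of $L_y$ on the relevant cones, Haviland--Riesz (Theorem \ref{thm:Haviland}) converts that into a representing measure, Schm\"udgen (resp.\ Putinar, legitimately invoked via Assumption \ref{ass:arch} and Theorem \ref{Thm:Schmudgen1991}) certifies strictly positive polynomials, and your $\varepsilon$-perturbation correctly extends the conclusion to all polynomials merely nonnegative on the compact set $K$.
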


If we let $ y_\alpha = \displaystyle \int_K x^\alpha d\mu$ we get $\displaystyle \int_K f d\mu = \displaystyle \sum_{\alpha \in \mathbb{N}^n} f_\alpha y_\alpha = L_y(f)$ and $\displaystyle \int_K d\mu = y_0 + \sum_{\alpha \in \mathbb{N}^n \smallsetminus 0} 0y_\alpha = y_0$, and can restate \eqref{eq:prim2} as
\begin{equation}
\label{eq:prim3}
\begin{array}{rl}
\rho_{\textnormal{mom}} = \displaystyle  \inf_{y} & L_y(f), \\
\textnormal{subject to}  & \displaystyle  y_0 = 1, \\
&  y_\alpha = \displaystyle \int_K x^\alpha d\mu, \alpha \in \mathbb{N}^n, \textrm{ for some } \mu \in \mathscr{M}(K)_+.
\end{array}
\end{equation}
If we let $v_j = \deg(g_j)$ and apply Theorem \ref{thm:dualpositivstellensatz} we get 
\begin{equation}
\label{eq:prim4}
\begin{array}{rl}
\rho_{\textnormal{mom}} = \displaystyle  \inf_{y} & L_y(f), \\
\textnormal{subject to}  & \displaystyle  y_0 = 1, \\
& \displaystyle  {\bf M}_i(y) \succeq 0, \textrm{ for all } i \in \mathbb{N}, \\
& \displaystyle  {\bf M}_{i-v_j}(yg_j) \succeq 0,\forall i \geq v_j, j = 1,\dots, m.
\end{array}
\end{equation}

When assumption \ref{ass:arch} holds we get the dual problem through \eqref{eq:dual1} and Putinar's Positivstellensatz:
\begin{equation}
\label{eq:dual2}
\begin{array}{rl}
\rho_{\textnormal{pop}} = \displaystyle  \sup_{\lambda, \{ \sigma_i\} }  & \lambda \\
\textnormal{subject to} & \displaystyle  f - \lambda= \sigma_0 + \sum_{i=1}^m \sigma_ig_i \geq 0, \\
& \displaystyle  \sigma_i \textnormal{ is a sum of squares for } i = 0,\dots,m.
\end{array}
\end{equation}
That $\sigma_i$ is a sum of squares is equivalent to that $\sigma_i=v(x)^TQ_iv(x)$ for some positive semidefinite matrix $Q_i$, hence we get the equivalent formulation
\[ 
\begin{array}{rl}
\rho_{\textnormal{pop}} =  \displaystyle \sup_{\lambda,\{ Q_i \}}  & \lambda \\
\textnormal{subject to} &  \displaystyle  f(X) - \lambda = v(X)^TQ_0v(X) + \sum_{i=1}^m v(X)^TQ_iv(X)g_i(X), \\
& \displaystyle \lambda \in \mathbb{R} \\
& \displaystyle  Q_i \succeq 0 \textnormal{ for } i = 1,\dots,m.
\end{array}
\]
Since the equality has to hold for every monomial separately we can rewrite the problem in order to get the problem on the form of a semidefinite program. If we use the notation $h(X)= \sum_\alpha [h(X)]_\alpha X^\alpha$, we get the equivalent problem 
\begin{equation}
\label{eq:dual3}
\begin{array}{rl}
\rho_{\textnormal{pop}} = \displaystyle \sup_{\lambda, \{Q_i\}}  & \lambda \\
\textnormal{subject to} & [\displaystyle f(X) - v(X)^TQ_0v(X) + \sum_{i=1}^m v(X)^TQ_iv(X)g_i(X)]_0 = \lambda, \\
& \displaystyle [f(X) - v(X)^TQ_0v(X) + \sum_{i=1}^m v(X)^TQ_iv(X)g_i(X)]_\alpha = 0, \\
& \displaystyle  \lambda \in \mathbb{R}, \\
& \displaystyle  Q_i \succeq 0 \textnormal{ for } i = 1,\dots,m, \\
\end{array}
\end{equation}
where $[f(X) - v(X)^TQ_0v(X) + \sum_{i=1}^m v(X)^TQ_iv(X)g_i(X)]_g$ is a linear polynomial in the entries of the matrices $Q_i$. Since the objective function is linear and constraints either linear or linear matrix inequalities this is indeed a semidefinite program. 

The primal and dual problems \eqref{eq:prim4} and \eqref{eq:dual2} have the same optimal values as the original pair \eqref{eq:prim2} and \eqref{eq:dual1}, hence strong duality holds by Theorem \ref{thm:duality}. As you might have noticed we have worked our way to some infinite-dimensional semidefinite programs both in the primal and dual. What the primal and dual problems also share is that they are easy to relax. In order to find a lower bound to the primal problem \eqref{eq:prim4} we only use moment matrices and localization matrices of degree $r$, and to relax the dual problem \eqref{eq:dual3} we can limit the degrees by replacing $v(x)$ with $v_r(x)$. Let $s$ be a positive integer, we have the following hierarchy of primal and dual relaxations, commonly referred to as the Lasserre hierarchy:
\begin{equation}
\label{eq:prim5}
\begin{array}{rl}
\rho_s^* = \displaystyle  \inf_{y} & L_y(f), \\
\textnormal{subject to}  & \displaystyle  y_0 = 1, \\
& \displaystyle  {\bf M}_{\lfloor s/2 \rfloor} (y) \succeq 0, \\
& \displaystyle  {\bf M}_{\lfloor (s-\deg(g_j))/2 \rfloor}(yg_j) \succeq 0, \forall j \in \{1,\dots, m\}, 
\end{array}
\end{equation}
and
\begin{equation}
\label{eq:dual4}
\begin{array}{rl}
\rho_s =\displaystyle \sup_{\lambda, \{ \sigma_i\} }  & \lambda \\
\textnormal{subject to} & \displaystyle  f - \lambda = \sigma_0 + \sum_{i=1}^m \sigma_ig_i \geq 0, \\
& \displaystyle  \sigma_0 \textnormal{ is a sum of squares of order } 2\lfloor s/2 \rfloor, \\
& \displaystyle  \sigma_i \textnormal{ is a sum of squares of order } 2 \lfloor (s-\deg(g_i))/2\rfloor.
\end{array}
\end{equation}
The following important convergence result by Lasserre (similar to Theorem 3.4 page 805 \cite{Lasserre2001}, and Theorem 4.1 page 79 \cite{Lasserre2010}) holds:
\begin{theorem}
Let $f,g_1,\dots,g_m \in \mathbb{R}[x]$, let $K= \{g_1 \geq 0,\dots,g_m \geq 0 \}$ be compact, and assume assumption \ref{ass:arch} holds. Let $\rho_{\textnormal{mom}}$ be the optimal value of \eqref{eq:prim4}, assumed to be finite, let $\rho_{\textnormal{pop}}$ be the optimal value of \eqref{eq:dual2}, and consider the sequence of primal and dual semidefinite relaxations defined in \eqref{eq:prim5} and \eqref{eq:dual4} with optimal values $(\rho_i)$ and $(\rho_i^*)$ respectively. Then $\rho_i^* \rightarrow \rho_{\textnormal{mom}}$ and $\rho_i \rightarrow \rho_{\textnormal{mom}}$ when $i \rightarrow \infty$.
\end{theorem}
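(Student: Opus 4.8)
The plan is to sandwich both relaxation sequences between a common nondecreasing lower bound and the value $\rho_{\textnormal{mom}}$, and then invoke Putinar's Positivstellensatz to push the limit all the way up to $\rho_{\textnormal{mom}}$. Throughout I write $\rho_s^*$ for the optimal value of the primal (moment) relaxation \eqref{eq:prim5} and $\rho_s$ for the optimal value of the dual (sum of squares) relaxation \eqref{eq:dual4}.

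First I would record monotonicity and the basic chain of inequalities. Increasing $s$ in \eqref{eq:prim5} enlarges the moment and localizing matrices whose positive semidefiniteness is demanded; since every principal submatrix of a positive semidefinite matrix is again positive semidefinite, a larger relaxation only adds constraints, so its feasible set shrinks and $\rho_s^*$ is nondecreasing in $s$. Dually, increasing $s$ in \eqref{eq:dual4} permits sums of squares of higher order, enlarging the feasible set, so $\rho_s$ is nondecreasing. Next, the full moment problem \eqref{eq:prim4} requires \emph{all} moment and localizing matrices to be positive semidefinite, which is a superset of the constraints appearing in \eqref{eq:prim5}; hence its feasible set is contained in that of every relaxation and $\rho_s^* \leq \rho_{\textnormal{mom}}$. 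Weak duality for the relaxed primal--dual semidefinite pair gives $\rho_s \leq \rho_s^*$. Combining, $\rho_s \leq \rho_s^* \leq \rho_{\textnormal{mom}}$, so both sequences are nondecreasing and bounded above and therefore converge to limits not exceeding $\rho_{\textnormal{mom}}$.

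The heart of the argument is to show the dual limit actually attains $\rho_{\textnormal{mom}}$. Fix $\varepsilon > 0$. Because $K$ is compact and $f$ is continuous, the first theorem of this section together with Theorem \ref{thm:duality} gives $\rho_{\textnormal{mom}} = \rho_{\textnormal{pop}} = f^* = \min_{x \in K} f(x)$, an attained minimum; hence
\[
f(x) - (\rho_{\textnormal{mom}} - \varepsilon) \geq \varepsilon > 0 \quad \textnormal{for all } x \in K,
\]
so $f - (\rho_{\textnormal{mom}} - \varepsilon)$ is strictly positive on $K$. Assumption \ref{ass:arch} guarantees that $\mathrm{QM}(g_1,\dots,g_m)$ is Archimedean, so Putinar's Positivstellensatz (Theorem \ref{thm:Putinar}) produces sums of squares $\sigma_0,\dots,\sigma_m$ with $f - (\rho_{\textnormal{mom}} - \varepsilon) = \sigma_0 + \sum_{i=1}^m \sigma_i g_i$. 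These $\sigma_i$ have some finite degrees, so choosing $s$ large enough that the order bounds in \eqref{eq:dual4} accommodate them makes $(\rho_{\textnormal{mom}} - \varepsilon, \{\sigma_i\})$ a feasible point of the $s$-th dual relaxation. Thus $\rho_s \geq \rho_{\textnormal{mom}} - \varepsilon$ for all large $s$; since $\rho_s \leq \rho_{\textnormal{mom}}$ and $\varepsilon$ is arbitrary, $\rho_s \to \rho_{\textnormal{mom}}$. The primal sequence then follows by squeezing: from $\rho_s \leq \rho_s^* \leq \rho_{\textnormal{mom}}$ and $\rho_s \to \rho_{\textnormal{mom}}$ we get $\rho_s^* \to \rho_{\textnormal{mom}}$ as well.

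The main obstacle is really just the single application of Putinar's Positivstellensatz, which is available here as Theorem \ref{thm:Putinar}; the remaining work is careful bookkeeping. I expect the two delicate points to be (i) checking that the finite-degree certificate furnished by Putinar fits inside the degree windows of \eqref{eq:dual4} for all sufficiently large $s$, and (ii) justifying the strict inequality $f - (\rho_{\textnormal{mom}} - \varepsilon) > 0$ on $K$, which relies on $\rho_{\textnormal{mom}}$ being the \emph{attained} minimum of $f$ (using compactness of $K$) rather than merely an infimum --- this is exactly what lets the $-\varepsilon$ shift create a strictly positive polynomial to which Putinar's theorem applies.
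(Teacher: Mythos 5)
Your proposal is correct and follows essentially the same route as the paper's proof: establish the chain $\rho_s \leq \rho_s^* \leq \rho_{\textnormal{pop}} = \rho_{\textnormal{mom}}$ (relaxation containment, weak SDP duality, and Theorem \ref{thm:duality}), then produce a strictly positive $\varepsilon$-shifted polynomial on $K$, apply Putinar's Positivstellensatz to get a finite-degree certificate, observe it is feasible for \eqref{eq:dual4} once $s$ is large enough, and squeeze. The only cosmetic difference is that you shift $\rho_{\textnormal{mom}}-\varepsilon$ directly (using $f \geq f^*$ pointwise on $K$, so you lose a single $\varepsilon$), whereas the paper perturbs a near-optimal feasible dual value $\lambda$ and loses $2\epsilon$; the conclusions are identical.
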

\begin{proof}
By Theorem \ref{thm:duality} we know that $\rho_{\textnormal{pop}}=\rho_{\textnormal{mom}}$. Furthermore we know that $\rho_{\textnormal{mom}} \geq \rho_i^*$ since any feasible solution to the primal is a feasible solution to its relaxation, $\rho_{\textnormal{pop}} \geq \rho_i$ since every solution to the dual relaxation is a solution to the dual, and $\rho_i \leq \rho_i^*$ because of weak duality in semidefinite programming. Thus we have
\[ 
\rho_i \leq \rho_i^* \leq \rho_{\textnormal{pop}}= \rho_{\textnormal{mom}}, \textnormal{ for all } i\geq \displaystyle \max_{j}(\deg(g_j)).
\]
Let $\epsilon > 0$ and let $\lambda$ be a feasible solution to \eqref{eq:dual2} satisfying
\begin{equation}
\label{eq:chainofeq}
\rho_{\textnormal{pop}}-\epsilon=\rho_{\textnormal{mom}}-\epsilon \leq \lambda \leq   \rho_{\textnormal{pop}}=\rho_{\textnormal{mom}}.
\end{equation}
Since $\lambda$ is a feasible solution
\[
f - \lambda \geq 0, \textrm{ on } K.
\]
Let $\bar{\lambda} = \lambda - \epsilon$ be a new solution, it is indeed feasible since $f -\bar{\lambda} \geq \epsilon > 0$ on $K$. Using Putinar's Positivstellensatz we get
\[ 
f -\bar{\lambda} = \sigma_0 + \displaystyle \sum_{j=1}^m \sigma_jg_j,
\]
where $\sigma_0,\dots,\sigma_m$ are sums of squares. This implies that $\bar{\lambda}$ is a feasible solution to the relaxation \eqref{eq:dual4} if $2i\geq \displaystyle \max_j \sigma_jg_j$, which in turn implies that
\[ 
\rho_{\textrm{mom}} - 2\epsilon \leq \bar{\lambda} \leq\rho_i,
\]
where we got the first equality by removing $\epsilon$ from both sides of the first inequality in \eqref{eq:chainofeq}. Since we have
\[
\rho_{\textnormal{mom}}-2\epsilon \leq \rho_i \leq \rho_i^* \leq \rho_{\textnormal{mom}}
\]
and since we picked $\epsilon>0$ arbitrary the result follows.
\end{proof}

It is possible to check whether the sequence of semidefinite relaxations has converged at relaxation $i$ \cite{Henrion2005}, but there is no guarantee that it converges in a finite number of steps. For many practical purposes the semidefinite program becomes huge and not possible to solve for computational reasons already for quite small values of $i$. Even when there is no hope for an optimal solution to the original problem the relaxation provides a lower bound, and the higher the order of the relaxation the better the lower bound.

%%%%%%%%%%%%%%%%%%%%%%%%%%%%%%
%     Exploiting Symmetries in Semidefinite programming    %
%%%%%%%%%%%%%%%%%%%%%%%%%%%%%% 
\section{Exploiting symmetries in semidefinite programming}
\label{sec:Sym}
In this section we explain how one can exploit symmetries in semidefinite programming. These methods or similar methods have been used in one way or another in the applications that are introduced in this section. For more details we refer to the references in each of the applications.

Let  $C$ and $A_1, \dots, A_m$ be real symmetric matrices and $b_1,\dots,b_m$ real numbers. In this section we provide the tools to reduce the order of the matrices in the semidefinite programming problem
\[ \max \{\mathrm{tr}(CX) ~ | ~X \textrm{ positive semidefinite},  \mathrm{tr}(A_i X)=b_i \textrm{ for } i = 1, \dots, m\} \]
when it is invariant under a group acting on its variables.

Inspired by \cite{KPSchrijver2007} and \cite{Klerk2011}, we are using a $\ast$--representation in order to reduce the dimension of the problem. For an introduction to $\ast$--representations we refer to the book by Takesaki \cite{Takesaki2002}. This method as well as other state-of-the art methods for invariant semidefinite programs are discussed in \cite{Bachoc2012}. Other important recent contributions include \cite{Kanno2001, Gatermann2004, Vallentin2009, Maehara2010, Murota2010, Riener2013}.

\begin{definition}
A \emph{matrix $\ast$-algebra} is a collection of matrices closed under addition, scalar and matrix multiplication, and transposition.
\end{definition}

Let $G$ be a finite group acting on a finite set $Z$ and define a homomorphism $h : G \rightarrow S_{|Z|}$ where $S_{|Z|}$ is the group of all permutations of $Z$. For every element $g \in G$ we have a permutation $h_g=h(g)$ of $Z$, for which  $h_{g g'}=h_{g}h_{g'}$ and $h_{g^{-1}}=h_{g}^{-1}$. For every permutation $h_{g}$, define the corresponding permutation matrix $M_{g} \in \{0,1\}^{|Z| \times |Z|}$ element-wise by
\[
(M_{g})_{i,j}=
\left\{
\begin{array}{rl}
 1 & \textrm{ if } h_{g}(i )= j, \\ 
 0 & \textrm{otherwise} 
\end{array}
\right.
\]
for all $i,j \in Z$.
Let the span of these matrices define the matrix $\ast$-algebra
\[
\mathcal{A} = \left\{ \sum_{g \in G} \lambda_g M_{g} ~ | ~ \lambda_g \in \mathbb{R} \right\}.
\]
The matrices $X$ satisfying $XM_{g}=M_{g}X$ for all $g \in G$ are the \emph{invariant matrices} of $G$. The collection of all such matrices, 
\[
\mathcal{A'} = \{X \in \mathbb{R}^{n \times n} | XM=MX \textrm{ for all } M \in \mathcal{A} \},
\]
is the \emph{commutant} of $\mathcal{A}$, which again is a $\ast$-algebra. Let $d=\dim\mathcal{A'}$ be the dimension of the commutant.

$\mathcal{A'}$ has a basis of $\{0,1\}$-matrices $E_1,\dots,E_d$ such that $\sum_{i=1}^d E_i = J$, where $J$ is the matrix of size $|Z| \times |Z|$ with all ones.

For every $i=1,\dots,d$ we normalize $E_i$ to 
\[
B_i = \frac{1}{\sqrt{tr(E_i^TE_i)}}E_i, 
\]
so that  $\mathrm{tr}(B_i^TB_j) = \delta_{i,j}$ where $\delta_{i,j}$ is the Kronecker delta.

We define the \emph{multiplication parameters} $\lambda_{i,j}^k$ by 
\[B_iB_j = \sum_{k=1}^d \lambda_{i,j}^kB_k \]
for $i,j,k = 1,\dots,d$.

The multiplication parameters are used to define $d\times d$-matrices $L_1,\dots,L_d$ by
\[
(L_k)_{i,j} = \lambda_{k,j}^i 
\]
for $k,i,j =1,\dots,d$. The matrices $L_1,\dots,L_d$ spans the linear space
\[ 
\mathcal{L}=\{\sum_{i=1}^d x_iL_i : x_1,\dots,x_d \in \mathbb{R} \} 
\]

\begin{theorem}[\cite{KPSchrijver2007}]
The linear function $\phi : \mathcal{A'} \rightarrow \mathbb{R}^{d \times d}$ defined by $\phi(B_i) = L_i$ for $i=1,\dots,d$ is a bijection. Additionally, the linear function also satisfies $\phi(XY)=\phi(X)\phi(Y)$ and $\phi(X^T)=\phi(X)^T$ for all $X,Y \in \mathcal{A}'$.
\end{theorem}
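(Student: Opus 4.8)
The plan is to recognize $\phi$ as the left regular representation of the associative algebra $\mathcal{A'}$ written in the orthonormal basis $B_1,\dots,B_d$, and then to read off all three claims from standard properties of that representation. First I would verify that $L_k$ is precisely the matrix of the left-multiplication operator $\mathrm{Left}_{B_k}:\mathcal{A'}\to\mathcal{A'}$, $Y\mapsto B_kY$, expressed in the basis $\{B_i\}$. By the definition of the multiplication parameters, $B_kB_j=\sum_i\lambda_{k,j}^iB_i$, so the $j$-th column of the matrix of $\mathrm{Left}_{B_k}$ has entries $\lambda_{k,j}^i=(L_k)_{i,j}$; extending by linearity, $\phi(X)$ is the matrix of $\mathrm{Left}_X$ for every $X\in\mathcal{A'}$.

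The homomorphism property is then immediate. Since $\mathcal{A'}$ is associative, $\mathrm{Left}_{XY}=\mathrm{Left}_X\circ\mathrm{Left}_Y$ as operators on $\mathcal{A'}$, and passing to matrices in a fixed basis turns composition into matrix product, giving $\phi(XY)=\phi(X)\phi(Y)$; it suffices to check this on the basis pairs $B_i,B_j$ and extend bilinearly. For injectivity I would use that the identity matrix lies in $\mathcal{A'}$, since $I$ commutes with every $M_g$. If $\phi(X)=0$, then $\mathrm{Left}_X$ is the zero operator, so in particular $X=\mathrm{Left}_X(I)=XI=0$. Hence $\phi$ is injective, and because $\dim\mathcal{A'}=d$ the matrices $L_1,\dots,L_d$ are linearly independent, so $\phi$ is a linear bijection of $\mathcal{A'}$ onto its image $\mathcal{L}=\mathrm{span}\{L_i\}$.

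The transpose identity is where the normalization $\mathrm{tr}(B_i^TB_j)=\delta_{i,j}$ does the essential work, and this is the step I expect to require the most care. I would equip $\mathcal{A'}$ with the trace inner product $\langle A,B\rangle=\mathrm{tr}(A^TB)$, for which $\{B_i\}$ is orthonormal by construction. A short computation gives $\langle \mathrm{Left}_X A,B\rangle=\mathrm{tr}(A^TX^TB)=\langle A,\mathrm{Left}_{X^T}B\rangle$, so $\mathrm{Left}_{X^T}$ is the adjoint of $\mathrm{Left}_X$ with respect to $\langle\cdot,\cdot\rangle$. Since the matrix of an operator's adjoint in an \emph{orthonormal} basis is the transpose of the operator's matrix, and since $\phi(X)$, $\phi(X^T)$ are the matrices of $\mathrm{Left}_X$, $\mathrm{Left}_{X^T}$, we conclude $\phi(X^T)=\phi(X)^T$. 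The point to confirm along the way is that $\mathcal{A'}$ is genuinely closed under transposition, so that $X^T\in\mathcal{A'}$ and $\phi(X^T)$ is defined; this holds because $\mathcal{A'}$ is a $\ast$-algebra, and it is exactly the orthonormality of the $B_i$ (rather than their being merely a basis) that lets transposition of matrices match the intrinsic adjoint.
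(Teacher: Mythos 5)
Your proof is correct: identifying $\phi(X)$ as the matrix of the left-multiplication operator $Y \mapsto XY$ in the orthonormal basis $\{B_i\}$, getting multiplicativity from associativity, injectivity from $I \in \mathcal{A}'$, and the transpose identity by showing left multiplication by $X^T$ is the adjoint of left multiplication by $X$ under the trace inner product is precisely the regular $\ast$-representation argument of de Klerk--Pasechnik--Schrijver, which the paper cites for this theorem without reproducing a proof. Your reading of ``bijection'' as a linear bijection onto the image $\mathcal{L}=\mathrm{span}\{L_1,\dots,L_d\}$ (the literal statement onto $\mathbb{R}^{d\times d}$ is impossible for dimension reasons) is also the intended one.
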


\begin{corollary}[\cite{KPSchrijver2007}]
$\sum_{i=1}^d x_iB_i$ is positive semidefinite if and only if $\sum_{i=1}^d x_iL_i$ is positive semidefinite.
\label{cor:schrijver}
\end{corollary}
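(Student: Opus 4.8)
The plan is to lean on the preceding theorem, which gives that $\phi$ is a \emph{bijective} $\ast$-homomorphism, and to show that any such map between matrix $\ast$-algebras preserves positive semidefiniteness. Writing $X = \sum_{i=1}^d x_i B_i \in \mathcal{A}'$ so that $\phi(X) = \sum_{i=1}^d x_i L_i$, the statement reduces to the claim that, for $X \in \mathcal{A}'$, one has $X \succeq 0$ if and only if $\phi(X) \succeq 0$.

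First I would record two preliminaries. Since $\mathcal{A}'$ is the commutant of $\mathcal{A}$ it contains the identity matrix $I$, and being a $\ast$-algebra it is closed under sums and products, hence contains every real polynomial in any of its elements. Next I would verify that $\phi$ is unital, i.e. $\phi(I) = I_d$: the entry formula $(L_k)_{i,j} = \lambda^i_{k,j}$ is exactly the matrix of left multiplication by $B_k$ in the basis $B_1,\dots,B_d$, so $\phi(Y)$ represents the operator ``left-multiply by $Y$'' on $\mathcal{A}'$, and for $Y = I$ this is the identity operator, whose matrix is $I_d$. In particular $I_d \in \mathcal{L}$, and $\mathcal{L} = \phi(\mathcal{A}')$ is itself a matrix $\ast$-algebra containing the identity; the same remarks apply to $\phi^{-1}$, which is again a bijective $\ast$-homomorphism.

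The heart of the argument is the characterization: a symmetric matrix $A$ lying in a matrix $\ast$-algebra $\mathcal{B}$ with $I \in \mathcal{B}$ is positive semidefinite if and only if $A = V^T V$ for some $V \in \mathcal{B}$. The backward direction is immediate since $V^T V \succeq 0$ for every $V$. For the forward direction I would take $V$ to be the positive semidefinite square root $\sqrt{A}$; the key point is that $\sqrt{A} = p(A)$ for a suitable real polynomial $p$ (obtained by Lagrange interpolation sending each eigenvalue $\mu \ge 0$ of $A$ to $\sqrt{\mu}$), so that $\sqrt{A} \in \mathcal{B}$ because $A, I \in \mathcal{B}$, and moreover $V = V^T$. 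Granting this, the corollary follows by transport: if $X \succeq 0$ then $X = V^T V$ with $V \in \mathcal{A}'$, whence $\phi(X) = \phi(V)^T \phi(V) \succeq 0$ using multiplicativity and $\phi(V^T) = \phi(V)^T$; conversely, if $\phi(X) \succeq 0$, applying the characterization inside $\mathcal{L}$ gives $\phi(X) = W^T W$ with $W \in \mathcal{L}$, so $X = \phi^{-1}(W)^T \phi^{-1}(W) \succeq 0$.

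The main obstacle is precisely the square-root step: keeping $\sqrt{A}$ inside the algebra, which is where closure under polynomials together with $I \in \mathcal{B}$ is used, and which is also why I first checked $\phi(I) = I_d$, so that $I_d \in \mathcal{L}$ and the backward implication can be run symmetrically through $\phi^{-1}$. An alternative that sidesteps square roots is to note that, since $\phi$ is a unital algebra isomorphism, $\phi(q(X)) = q(\phi(X))$ for every polynomial $q$; injectivity then forces $X$ and $\phi(X)$ to have the same minimal polynomial and hence the same set of eigenvalues, and as $\phi$ and $\phi^{-1}$ preserve symmetry while a real symmetric matrix is positive semidefinite exactly when its eigenvalues are nonnegative, the equivalence again follows.
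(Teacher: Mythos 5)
Your proof is correct, but there is nothing in the paper to compare it against: the text states both Theorem 5.6 and this corollary as quoted results from \cite{KPSchrijver2007} and gives no proof of either, so your argument is genuinely supplementary rather than a variant of the paper's. What you supply is a sound, self-contained derivation, and the two preliminaries you isolate are exactly the points that need care. The observation that $(L_k)_{i,j}=\lambda_{k,j}^i$ makes $L_k$ the matrix of left multiplication by $B_k$ in the basis $B_1,\dots,B_d$ correctly yields unitality, $\phi(I)=I_d$; this is indispensable, since without $I\in\mathcal{A}'$ and $I_d\in\mathcal{L}$ the polynomial calculus (in particular constant terms) used for the square root would fail. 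The factorization step is also right: for symmetric $A\succeq 0$ in a unital matrix $\ast$-algebra $\mathcal{B}$, Lagrange interpolation on the distinct eigenvalues gives $\sqrt{A}=p(A)\in\mathcal{B}$, so $A=V^TV$ with $V\in\mathcal{B}$, and positivity then transports through $\phi$ and, symmetrically, through $\phi^{-1}$ (you correctly check that $\mathcal{L}=\phi(\mathcal{A}')$ is itself a unital matrix $\ast$-algebra, and that symmetry of $X$ and of $\phi(X)$ correspond under injectivity). Your alternative closing argument is equally valid: a unital injective homomorphism preserves minimal polynomials, symmetric matrices are diagonalizable so their spectra are the roots of those polynomials, and positive semidefiniteness depends only on the set of eigenvalues, not their multiplicities. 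Either route would serve as a complete proof of the corollary; the only caveat worth flagging is that the cited theorem's phrase ``bijection onto $\mathbb{R}^{d\times d}$'' must be read, as you implicitly do, as a bijection onto its image $\mathcal{L}$.
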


Given that it is possible to find a solution $X \in \mathcal{A}'$ we can use Corollary \ref{cor:schrijver} to reduce the order of the semidefinite constraint.

\begin{lemma}
\label{lem:groupaverage}
There is a solution $X \in \mathcal{A}'$ to a $G$-invariant semidefinite program
\[ 
\max \{\mathrm{tr}(CX) ~ | ~X \textrm{ positive semidefinite},  \mathrm{tr}(A_i X)=b_i \textrm{ for } i = 1, \dots, m\}. 
\]
\end{lemma}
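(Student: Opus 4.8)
The plan is to use the standard group-averaging (Reynolds) argument: starting from an optimal solution, symmetrize it over $G$ so that it lands inside the commutant $\mathcal{A}'$ without changing feasibility or the objective value. Concretely, I would first fix any optimal solution $X^*$ of the semidefinite program (its existence being part of what it means for the program to have a solution) and define the averaged matrix
\[
\bar{X} = \frac{1}{|G|} \sum_{g \in G} M_g X^* M_g^T.
\]
Since each $M_g$ is a permutation matrix we have $M_g^T = M_g^{-1} = M_{g^{-1}}$ and $M_g M_{g'} = M_{gg'}$, which are the only algebraic facts about the $M_g$ that the argument needs.

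The three things to check are that $\bar{X}$ lies in $\mathcal{A}'$, that it is feasible, and that it is optimal. For membership in $\mathcal{A}'$ I would verify $M_h^T \bar{X} M_h = \bar{X}$ for every $h \in G$: substituting the reindexing $g' = h^{-1} g$ and using $M_h^T M_g = M_{h^{-1}g}$ together with $M_g^T M_h = M_{h^{-1}g}^T$, the sum is carried back to itself, so $M_h \bar{X} = \bar{X} M_h$ for all $h$ and hence $\bar{X} \in \mathcal{A}'$. For feasibility I would argue one summand at a time: because $M_g$ is orthogonal, $M_g X^* M_g^T$ has the same eigenvalues as $X^*$ and is therefore positive semidefinite, and the $G$-invariance of the problem data (the objective $C$ and the constraint system $\{(A_i,b_i)\}$ are preserved by the congruence $X \mapsto M_g X M_g^T$) guarantees that each $M_g X^* M_g^T$ again satisfies the equality constraints $\mathrm{tr}(A_i X)=b_i$. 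The feasible set of an SDP is convex, so the average $\bar{X}$ of these feasible points is again feasible. Finally, since the objective $\mathrm{tr}(CX)$ is linear and $G$-invariant, each $M_g X^* M_g^T$ attains the optimal value and so does their average; thus $\bar{X}$ is an optimal solution lying in $\mathcal{A}'$.

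The step requiring the most care is the feasibility and optimality part, because it is exactly here that the hypothesis \emph{$G$-invariant} must be made precise and used: one needs to know that the symmetry group acts on the problem so that $X \mapsto M_g X M_g^T$ sends feasible points to feasible points with the same objective value, equivalently that $C$ is fixed and the collection of pairs $(A_i,b_i)$ is permuted by $G$. Once this is granted, everything else is routine linear algebra. I would also emphasize that the reindexing computation establishing membership in $\mathcal{A}'$ is the one genuinely group-theoretic ingredient, and that it relies only on $G$ being finite, so that the normalizing factor $1/|G|$ is well defined and the substitution $g' = h^{-1}g$ is a bijection of the index set $G$.
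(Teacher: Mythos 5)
Your proposal is correct and follows essentially the same route as the paper: the paper's proof also takes an optimal solution $X$, forms the group average $X' = \frac{1}{|G|}\sum_{g \in G} M_g X M_g^T$, and verifies via the commutation of $C, A_i$ with the $M_g$ and invariance of the trace that $X'$ is feasible, optimal, and lies in $\mathcal{A}'$. The only cosmetic difference is that you phrase $G$-invariance as the constraint system being permuted by the congruence $X \mapsto M_g X M_g^T$ and invoke convexity, while the paper assumes each $A_i$ commutes with every $M_g$ and pushes $A_j$ through the sum directly; these amount to the same argument.
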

\begin{proof}
Let $C,A_1,\dots,A_m$ be $|Z| \times |Z|$ matrices commuting with $M_g$ for all $g \in G$. If $X$ is an optimal solution to the optimization problem then the group average, $X'=\frac{1}{|G|} \sum_{g \in G}M_g X M_g^T$, is also an optimal solution:
It is feasible since 
\[
\begin{array}{rl}
\displaystyle \mathrm{tr}(A_jX') &= \displaystyle  \mathrm{tr}(A_j\frac{1}{|G|} \sum_{g \in G}M_g X M_g^T) \\
&= \displaystyle  \mathrm{tr}(\frac{1}{|G|} \sum_{g \in G}M_g A_jX M_g^T) \\
&= \displaystyle  \mathrm{tr}(\frac{1}{|G|} \sum_{g \in G}A_jX ) \\
&= \displaystyle  \mathrm{tr}(A_jX),
\end{array}
\]
where we have used that the well-known fact that the trace is invariant under change of basis. By the same argument  $\mathrm{tr}(CX') =\mathrm{tr}(CX)$, which implies that $X'$ is optimal. It is easy to see that $X' \in \mathcal{A}'$.
\end{proof}

All in all we get the following theorem:

\begin{theorem}[\cite{KPSchrijver2007}]
\label{thm:Schrijver}
The $G$-invariant semidefinite program
\[
\max \{\mathrm{tr}(CX) ~ | ~X \succeq 0,  \mathrm{tr}(A_i X)=b_i \textrm{ for } i = 1, \dots, m\} 
\]
has a solution $X = \sum_{i=1}^d x_iB_i$ that can be obtained by
\[
\max \{\mathrm{tr}(CX) ~ | ~ \sum_{i=1}^d x_iL_i \succeq 0, \mathrm{tr}(A_i \sum_{j=1}^d B_jx_j)=b_i \textrm{ for } i = 1, \dots, m\}.
\]
\end{theorem}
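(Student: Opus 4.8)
The plan is to assemble the two preceding results, Lemma \ref{lem:groupaverage} and Corollary \ref{cor:schrijver}, into the single reformulation claimed. The first reduces the search space from all symmetric matrices to the commutant $\mathcal{A}'$, and the second trades the (large) $|Z| \times |Z|$ semidefinite constraint on matrices in $\mathcal{A}'$ for a (small) $d \times d$ one. First I would invoke Lemma \ref{lem:groupaverage}: the $G$-invariance hypothesis is exactly that $C$ and each $A_i$ commute with every $M_g$, so the lemma guarantees an optimal solution $X^\ast$ lying in $\mathcal{A}'$. Since $B_1,\dots,B_d$ form a basis of $\mathcal{A}'$, every such matrix can be written $X = \sum_{i=1}^d x_i B_i$, so restricting the optimization to matrices of this form loses no optimal value.

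Next I would rewrite the affine data in the coordinates $x_i$. Substituting $X = \sum_{j=1}^d x_j B_j$ into the objective and the equality constraints turns $\mathrm{tr}(CX)$ into $\sum_j \mathrm{tr}(C B_j)\, x_j$ and each $\mathrm{tr}(A_i X)$ into $\sum_j \mathrm{tr}(A_i B_j)\, x_j$; both remain linear in $(x_1,\dots,x_d)$, so the feasibility conditions $\mathrm{tr}(A_i X) = b_i$ are unchanged in content and simply reexpressed over the reduced variable set. The only constraint still phrased in terms of the large matrix is $X \succeq 0$.

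For that constraint I would apply Corollary \ref{cor:schrijver}: for $X = \sum_i x_i B_i \in \mathcal{A}'$, positive semidefiniteness of $X$ holds if and only if $\sum_i x_i L_i \succeq 0$, where the latter matrix has the reduced order $d \times d$. Replacing $X \succeq 0$ by $\sum_i x_i L_i \succeq 0$ produces precisely the reduced program in the statement. To conclude that the optima coincide, I would argue both directions: any feasible point of the reduced program yields, via $X = \sum_i x_i B_i$, a feasible point of the original program inside $\mathcal{A}'$ with the same objective value, while the optimal $X^\ast$ furnished by the lemma gives a feasible point of the reduced program with the same value. Hence the reduced program attains the same optimum and its optimizer $X = \sum_i x_i B_i$ solves the original.

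The substance of the argument is entirely in Lemma \ref{lem:groupaverage} and Corollary \ref{cor:schrijver}, so the main obstacle is not any one computation but keeping the logical bookkeeping correct: one must verify that restricting to $\mathcal{A}'$ does not change the optimal value, and that the equivalence of Corollary \ref{cor:schrijver} is invoked in both directions, so that passing to the reduced formulation neither discards feasible points nor introduces spurious ones.
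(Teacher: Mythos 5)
Your proposal is correct and takes essentially the same route as the paper, which states Theorem \ref{thm:Schrijver} as the direct combination (``all in all'') of Lemma \ref{lem:groupaverage} (group averaging yields an optimal solution in the commutant $\mathcal{A}'$, hence expressible as $\sum_i x_i B_i$) and Corollary \ref{cor:schrijver} (trading $X \succeq 0$ for $\sum_i x_i L_i \succeq 0$). Your explicit two-directional check that the reduced program neither discards nor introduces feasible points is sound bookkeeping that the paper leaves implicit.
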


Dimension reduction of invariant semidefinite programs is useful in a wide range of combinatorial problems including Lov\'asz $\vartheta$ number, crossing numbers, error-correcting codes and kissing numbers. One can also do dimension reduction in quadratic assignment problems, which for example has been used to find a new relaxation of the traveling salesman problem. We briefly explain these applications below, and a more extensive survey of these and other recent applications can be found in \cite{Klerk2010, Bachoc2012}. The methods are also useful to reduce the dimension of the invariant semidefinite programs arising when counting arithmetic progressions, which will be seen in later chapters. Many of the results in this thesis relies on computations that would not be possible without this step.

\subsection{Lov\'asz $\vartheta$-number}
The $\vartheta$ number of a graph $G=(V,E)$ is the optimal value of the semidefinite program
\[
\begin{array}{rl}
\vartheta(G)= \displaystyle \sup & \mathrm{tr}(JX), \\
 \textnormal{subject to} & X_{ij} = 0 \textrm{ for all }\{i,j\} \in E (i \neq j), \\
& \mathrm{tr}(X) = 1, \\
& X \succeq 0.
\end{array}
\]
One of the main reasons to study $\vartheta(G)$ is that it is "sandwiched" between the independence number $\alpha(G)$, the size of the largest clique in $G$, and $\chi(\bar{G})$, the chromatic number of the complement of $G$. We have the following theorem from Lov\'asz in 1979 \cite{Lovasz1979}:
\begin{theorem}[Lov\'asz sandwich theorem]
\[
\alpha(G) \leq \vartheta(G) \leq \chi(\bar{G}).
\]
\end{theorem}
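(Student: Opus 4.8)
The plan is to prove the two inequalities separately: the lower bound by exhibiting an explicit feasible matrix built from a maximum independent set, and the upper bound by pairing a feasible matrix against a positive semidefinite ``slack'' matrix built from a coloring of $\bar{G}$. For the lower bound $\alpha(G)\le\vartheta(G)$, let $S\subseteq V$ be a maximum independent set, write $\alpha:=|S|=\alpha(G)$, and let $u\in\{0,1\}^V$ be its indicator vector. I would take $X=\frac{1}{\alpha}uu^T$. Then $X\succeq 0$ as a nonnegative multiple of a rank-one Gram matrix, $\mathrm{tr}(X)=\frac{1}{\alpha}\|u\|^2=1$, and for each edge $\{i,j\}\in E$ the independent set $S$ contains at most one endpoint, so $X_{ij}=\frac{1}{\alpha}u_iu_j=0$; hence $X$ is feasible. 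The objective value is $\mathrm{tr}(JX)=\frac{1}{\alpha}\big(\sum_i u_i\big)^2=\frac{1}{\alpha}\alpha^2=\alpha$, so $\vartheta(G)\ge\alpha(G)$.

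For the upper bound $\vartheta(G)\le\chi(\bar{G})$, set $k:=\chi(\bar{G})$ and fix a proper $k$-coloring of $\bar{G}$. Its color classes partition $V=C_1\cup\dots\cup C_k$ into sets that are independent in $\bar{G}$, that is, cliques of $G$; write $c(i)$ for the class of vertex $i$. The gadget is a family of ``simplex'' vectors $w_1,\dots,w_k\in\mathbb{R}^{k-1}$ whose Gram matrix is $kI_k-J_k$; this matrix is positive semidefinite, since $J_k$ has eigenvalues $k$ (once) and $0$ (with multiplicity $k-1$), so $kI_k-J_k$ has eigenvalues $0$ and $k$. Concretely $\|w_l\|^2=k-1$ and $\langle w_l,w_{l'}\rangle=-1$ for $l\ne l'$. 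Setting $v_i:=w_{c(i)}$ and $W_{ij}:=\langle v_i,v_j\rangle$ yields a positive semidefinite matrix $W$ with diagonal entries $W_{ii}=k-1$ and off-diagonal entries $W_{ij}=k-1$ when $c(i)=c(j)$ and $W_{ij}=-1$ otherwise.

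Now I would pair $W$ against an arbitrary feasible $X$. Since $W$ and $X$ are both positive semidefinite, $\mathrm{tr}(WX)\ge 0$. To evaluate the pairing, note that two distinct vertices of the same class lie in a common clique of $G$ and are therefore adjacent, so the feasibility constraint forces $X_{ij}=0$ on exactly the off-diagonal pairs where $W_{ij}=k-1$; those terms vanish. Consequently the different-class off-diagonal sum equals the full off-diagonal sum, and since $\sum_{i,j}X_{ij}=\mathrm{tr}(JX)$ and $\sum_i X_{ii}=\mathrm{tr}(X)$, a short computation gives $\mathrm{tr}(WX)=(k-1)\mathrm{tr}(X)-\big(\mathrm{tr}(JX)-\mathrm{tr}(X)\big)=k-\mathrm{tr}(JX)$, using $\mathrm{tr}(X)=1$. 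Combined with $\mathrm{tr}(WX)\ge 0$, this gives $\mathrm{tr}(JX)\le k$ for every feasible $X$, hence $\vartheta(G)\le\chi(\bar{G})$.

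The main obstacle is the upper bound, and specifically the construction and verification of $W$. Two things must be checked: that $W\succeq 0$, which is exactly why the simplex Gram matrix $kI_k-J_k$ is the right gadget, and that the entry pattern of $W$ meshes with the support constraints $X_{ij}=0$ on edges so that the pairing collapses to $k-\mathrm{tr}(JX)$. Conceptually $W$ is the slack matrix of a dual-feasible solution of value $k$, so this argument is really weak semidefinite duality in disguise; phrasing it directly through $\mathrm{tr}(WX)\ge 0$ (the trace of a product of two positive semidefinite matrices is nonnegative) avoids having to write the dual program explicitly.
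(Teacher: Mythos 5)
The paper does not prove this theorem at all: it is quoted as a known result with a citation to Lov\'asz's 1979 paper, so there is no internal proof to compare yours against. That said, your argument is correct and complete for the primal formulation of $\vartheta(G)$ used in the paper. The lower bound via the rank-one feasible point $X=\frac{1}{\alpha}uu^T$ built from a maximum independent set is the standard argument, and your upper bound is a clean weak-duality argument made explicit: the Gram matrix $W$ of simplex vectors $w_1,\dots,w_k$ (Gram matrix $kI_k-J_k\succeq 0$) assigned through a proper $k$-coloring of $\bar{G}$ satisfies $\mathrm{tr}(WX)\geq 0$ against any feasible $X$, the same-class off-diagonal entries of $X$ vanish because color classes of $\bar{G}$ are cliques of $G$, and the identity $\mathrm{tr}(WX)=k\,\mathrm{tr}(X)-\mathrm{tr}(JX)=k-\mathrm{tr}(JX)$ then forces $\mathrm{tr}(JX)\leq k$. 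One wording nit: feasibility forces $X_{ij}=0$ on \emph{all} edges of $G$, not ``exactly'' on the same-class pairs (different-class pairs may also be adjacent); your computation only needs the inclusion you actually use, namely that same-class pairs are edges, so nothing breaks. The degenerate case $k=1$ (then $G$ is complete and $W=0$) also checks out, since every feasible $X$ is then diagonal with $\mathrm{tr}(JX)=\mathrm{tr}(X)=1$.
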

To compute $\alpha(G)$ and $\chi(G)$ are NP-complete problems, thus if one can find $\vartheta$ quickly it gives valuable bounds for the other two graph properties when they cannot be obtained. For a long exposition of the $\vartheta$ number we refer to the notes by Knuth on the subject \cite{Knuth1993}.

When there are a lot of symmetries in the graph one can use the described techniques to reduce the dimension of the semidefinite program. This allows calculation of the $\vartheta$-number for larger graphs than otherwise possible \cite{Dukanovic2007, Klerk2009}.

\subsection{Block codes} 
Let us fix an alphabet ${\bf q} = \{0,1,\dots,q-1\}$ for some integer $q \geq 2$.  ${\bf q}^n$ is called the  \emph{Hamming space} and it is equipped with a metric $d(\cdot,\cdot)$ called \emph{Hamming distance}, which is given by
\[
d(u,v) = | \{ i : u_i \neq v_i \} | \textrm{ for all } u,v \in {\bf q}^n.
\]
A subset $C \subseteq {\bf q}^n$ is called a \emph{code} of length $n$, and for a nonempty code we define the \emph{minimum distance} of $C$ to be 
\[
\min \{ d(u,v) : u,v \in C \textrm{ are distinct} \} .
\]
With $A_q(n,d)$ we denote the maximum size of a code of length $n$ and minimum distance at least $d$: 
\[
A_q(n,d) = \max \{ |C| : C \in {\bf q}^n \textrm{ has minimum distance at least } d\}.
\]
$A_q(n,d)$ is an important quantity in coding theory, and it is of interest to find good upper and lower bounds. Lower bounds are usually obtained by finding explicit constructions of codes, whereas upper bounds are usually obtained using other methods.

The \emph{Hamming graph} $G_{\bf q}(n,d)$ is constructed on the vertex set ${\bf q}^n$ by connecting two words $u,v \in {\bf q}^n$ if $d(u,v) < d$. Since the codes of minimum distance at most $d$ correspond to independent sets in $G_q(n,d)$ we have
\[
A_q(n,d) = \alpha(G_q(n,d))
\]
where $\alpha$ is the independence number of a graph. Recall that Lovasz $\vartheta$ number is a bound for the independence number, and hence the previous subsection gives a first class of bounds.

It is also fairly easy to see that for the graph parameter 
\[
\vartheta'(G) = \max \{ \langle X, J \rangle : \langle X, I \rangle = 1, X_{uv}=0 \textrm{ if } uv \in E(G), X \geq 0, X \succeq 0 \},
\]
where $I$ is the identity and $J$ the all one matrix, we have $\alpha(G) \leq \vartheta'(G) \leq \vartheta(G)$. Thus one can try to find an upper bound $A_q(n,d)$ by solving the semidefinite program $\vartheta'(G_q(n,d))$. The semidefinite program has an exponential size, but by symmetry reduction it can be solved more efficiently. It was found independently in \cite{McEliece1978} and \cite{Schrijver1981} that the semidefinite program can be reduced to a linear program that gives the Delsarte bounds \cite{Delsarte1973}. Invariant semidefinite programming methods have been used to find sharper bounds in several papers including \cite{Schrijver2005, Gijswijt2006, Laurent2007, Laurent2009}.

\subsection{Crossing numbers}
The \emph{crossing number} $\mathrm{cr}(G)$ of a graph $G$ is the minimum number of pairwise intersection of edges when $G$ is drawn in a plane. Exact crossing numbers are only known in few examples, and to find exact crossing numbers of families of graphs is still an active area of research.

Zarankiewicz claimed in 1954 \cite{Zarankiewicz1954} that the crossing number $\mathrm{cr}(K_{m,n})$ of the complete bipartite graph equals the Zarankiewicz number, which is given by $Z(m,n)=\lfloor \frac{m-1}{2} \rfloor \lfloor \frac{m}{2} \rfloor \lfloor \frac{n-1}{2} \rfloor \lfloor \frac{n}{2} \rfloor$. Ringel and Kainen independently found a gap in the proof and Zarankiewicz's claim has been conjectured since. Zarankiewicz argument for the upper bound was correct, and thus it is known that
\[ 
\mathrm{cr}(K_{m,n}) \leq Z(m,n)
\]
for any positive integers $m,n$, and so one usually tries to improve on the lower bound. It is also conjectured that the crossing number of the complete graph $\mathrm{cr}(K_n)$ equals $Z(n) = \lfloor \frac{n}{2} \rfloor \lfloor \frac{n-1}{2} \rfloor \lfloor \frac{n-2}{2} \rfloor \lfloor \frac{n-3}{2} \rfloor$.
Using semidefinite programming it was found in \cite{Klerk2006} that for each $m \geq 9$ it holds that 
\[
\begin{array}{l}
\lim_{n \rightarrow \infty} \mathrm{cr}(K_{m,n}) / Z(m,n) \geq 0.83m/(m-1), \\
\lim_{n \rightarrow \infty} \mathrm{cr}(K_{n,n}) / Z(n,n) \geq 0.83, \\
\lim_{n \rightarrow \infty} \mathrm{cr}(K_{n}) / Z(n) \geq 0.83.
\end{array}
\]
By carefully reducing the semidefinite program by exploiting symmetries the bound for $\mathrm{cr}(K_{m,n})$ was improved \cite{KPSchrijver2007} to
\[
\begin{array}{l}
\lim_{n \rightarrow \infty} \mathrm{cr}(K_{m,n}) / Z(m,n) \geq 0.8594m/(m-1), \\
\end{array}
\]
for all $m \geq 9$. The computations heavily relies on the symmetry reduction; without them one would need to solve a semidefinite program with $40320 \times 40320$ matrices, which is still far from possible with current SDP-solvers. In \cite{Klerk2011} the matrices in the semidefinite program were further block-diagonalized, which significantly improved computation time.

\subsection{Kissing numbers} 
The \emph{kissing number} is the maximum number $\tau_n$ of unit spheres with no pairwise overlap that can simultaneously touch the unit sphere in $n$-dimensional Euclidean space.

For the first non-trivial dimension, $n=3$, Sch\"utte and van der Waerden \cite{Schutte1952} found that $\tau_3 = 12$. It is also known that $\tau_4=24$ \cite{Musin2008}, and that $\tau_8=240$ and $\tau_{24}=196560$ \cite{Odlyzko1979, Levenshtein1979}. For other dimensions only bounds are known. A way to find upper bounds using linear programming was developed by Delsarte \cite{Delsarte1973}, and Delsarte, Goethals and Seidel \cite{Delsarte1977}. These bounds were improved by Bachoc and Vallentin \cite{Bachoc2008}, and Mittelmann and Vallentin \cite{Mittelmann2010} for all $n \leq 24$ using a semidefinite program that strengthens the linear program. The symmetry of the sphere is essential to obtain a finite SDP relaxation.

\subsection{Quadratic assignment problems}
A \emph{quadratic assignment problem} is a problem on the form 
\[
\min_{X \in \Pi_n} \mathrm{tr}(AX^TBX)
\]
where $\Pi_n$ is the set of $n \times n$ permutation matrices, and $A$ and $B$ are symmetric $n \times n$ matrices. Solving the quadratic assignment problem is computationally difficult, it is known to be an NP-hard problem, and thus trying to find bounds using relaxations is the best one can do when $n$ is not very small.

From the quadratic assignment problem we can form the following SDP relaxation \cite{Zhao1998, Klerk2010_2}:
\[
\begin{array}{rl}
 \min & \mathrm{tr}(A \otimes B) Y \\
 \textrm{subject to} & \mathrm{tr}((I \otimes (J-I)) Y + ((J-I) \otimes I ) Y) = 0 \\
 & \mathrm{tr}(Y) - 2 e^Ty = -n \\
&  \left( \begin{array}{cc} 1 & y^T \\ y & Y \end{array} \right) \succeq 0 \\
 & Y \geq 0
\end{array},
\]
where $\otimes$ is the Kronecker product, $I$ the identity matrix and $J$ the all one matrix. It is a relaxation since $Y = \mathrm{vec}(X)\mathrm{vec}(X)^T$ and $y = \mathrm{diag}(Y)$ is a feasible solution if $X \in \Pi_n$. Depending on the structure of $A$ and $B$ the sizes of the semidefinite constraint can sometimes be reduced significantly by the methods developed in this section. In \cite{Klerk2010_2, Klerk2012} the methods are discussed in the context of the quadratic assignment problem, and a lot of specific instances of the problem are discussed and solved. Computation times are improved in many instances and many bounds, especially for larger problems, are improved. 

The traveling salesman problem is the problem of finding a Hamiltonian circuit of minimum length in a graph, and it is easy to show that it is a special case of the quadratic assignment problem. Using the SDP relaxation and symmetry reduction of the quadratic assignment problem one gets the current strongest SDP relaxation of the traveling salesman problem \cite{Klerk2008}.

\subsection{Counting monochromatic arithmetic progressions in a 2-coloring}
One of the most difficult problems in Ramsey theory is to determine the existence of monochromatic arithmetic progressions in a group when its elements have been colored. The surveyed tools from real algebraic geometry makes it possible to count the progressions, a generalization of the existence problem, by stating it as a semidefinite program. The enumeration is done for the cyclic group in \cite{Sjoland_cyclic} and for any finite group in \cite{Sjoland_group}.

An arithmetic progression in a finite group $G$ of length $k \in \mathbb{Z}^+$ is a set of $k$ distinct elements, $\{a, b \cdot a, b\cdot b \cdot a,\dots, b^{k-1} \cdot a \}$, for any $a \in G$ and $b \in G \smallsetminus \{0\}$. In other words $\{1,2,3\}$, $\{1,3,2\}$, $\{2,1,3\}$, $\{2,3,1\}$, $\{3,1,2\}$ and $\{3,2,1\}$ should be considered as the same arithmetic progression. Thus when we sum over all arithmetic progressions only one representative for every arithmetic progression is used.

Let $\chi : G \rightarrow \{-1,1\}$ be a $2$-coloring of the group $G$, and let $x_g = \chi(g)$ for all $g \in G$. Furthermore, let $x$ be the vector of all variables $x_g$. For $a,b,c \in G$, let
\[
\begin{array}{rl}
p(x_a,x_b,x_c) &= \displaystyle \frac{(x_a+1)(x_b+1)(x_c+1)-(x_a-1)(x_b-1)(x_c-1)}{8}  \\
&= \displaystyle \frac{x_ax_b+x_ax_c+x_bx_c+1}{4} \\
&= \left\{ 
\begin{array}{ll}
1 & \text{if }  x_a=x_b=x_c\\
0 & \text{otherwise.}
\end{array} \right. 
\end{array}
\]
Hence $p$ is one if $a,b$ and $c$ are of the same color and zero otherwise. Let $R(3,G,2)$ denote the minimum number of monochromatic arithmetic progression of length 3 in a 2-coloring of the finite group $G$. It holds that:
\[
R(3,G,2)=  \min_{x \in \{-1,1\}^{n}} \displaystyle\sum_{\{a,b,c\} \textrm{ is an A.P. in } G}p(x_a,x_b,x_c).
\]
The problem can easily be relaxed to a semidefinite program with a lot of symmetries that can be exploited by the methods in this section. Using these techniques one can obtain the following results:

\begin{theorem}[\cite{Sjoland_cyclic}]
\label{thm:cycliccase}
Let $n$ be a positive integer and let $R(3,\mathbb{Z}_n,2)$ denote the minimal number of monochromatic $3$-term arithmetic progressions in any two-coloring of $\mathbb{Z}_n$. $n^2/8-c_1n+c_2 \leq R(3,\mathbb{Z}_n,2) \leq n^2/8-c_1n+c_3$ for all values of $n$, where the constants depends on the modular arithmetic and are tabulated in the following table.
\[
\begin{array}{c|c|c|c}
n \mod 24 & c_1 & c_2 & c_3 \\
\hline
1,5,7,11,13,17,19,23 & 1/2 & 3/8 & 3/8 \\
8,16 & 1 & 0 & 0 \\
2,10 & 1 & 3/2 & 3/2 \\
4,20 & 1 & 0 & 2 \\
14,22  & 1 & 3/2 & 3/2 \\
3,9,15,21 &  7/6 & 3/8 & 27/8 \\ 
0 &  5/3 & 0  & 0 \\ 
12 &  5/3 & 0 & 18 \\ 
6,18 &  5/3 &1/2 & 27/2 \\ 
\end{array}
\]
\end{theorem}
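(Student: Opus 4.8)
The plan is to reduce the purely combinatorial counting problem to an exact evaluation of a character-sum / Fourier-type expression over $\mathbb{Z}_n$, and then to extract the matching upper and lower bounds by analysing the same expression over all two-colorings. The starting point is the identity
\[
R(3,\mathbb{Z}_n,2) = \min_{x \in \{-1,1\}^n} \sum_{\{a,b,c\}} \frac{x_ax_b + x_ax_c + x_bx_c + 1}{4},
\]
where the sum runs over one representative per $3$-term arithmetic progression. First I would count the total number of such progressions in $\mathbb{Z}_n$ exactly: a progression is determined by a middle term $b\cdot a$ and a common difference, so the count is a quadratic-in-$n$ quantity whose precise value depends on how many difference choices produce degenerate (repeated-element) triples, which is governed by $\gcd$ conditions and hence by $n \bmod 24$. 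This accounts for the constant term $1$ summed over all progressions, contributing the leading $n^2/8$ together with the linear correction $-c_1 n$.

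**The main computation** is to handle the quadratic part $\sum x_ax_b + x_ax_c + x_bx_c$. The natural approach is to rewrite the count using the indicator that $(a,a+d,a+2d)$ is a progression and expand the sum over all $a,d$, grouping the three bilinear terms. Summing $x_ax_{a+d}$ over all $a$ produces an autocorrelation of the coloring vector, which I would diagonalize via the discrete Fourier transform on $\mathbb{Z}_n$: writing $\hat{x}(\xi) = \sum_a x_a \omega^{a\xi}$ with $\omega = e^{2\pi i/n}$, each autocorrelation sum becomes $\sum_\xi |\hat{x}(\xi)|^2 \omega^{(\cdot)\xi}$. The total quadratic contribution then collapses into a weighted sum $\sum_\xi w(\xi)\,|\hat{x}(\xi)|^2$ for explicit nonnegative weights $w(\xi)$ coming from the three shifts $d, 2d$; minimizing the whole objective is equivalent to choosing the coloring so as to concentrate or spread the Fourier mass optimally subject to the Parseval constraint $\sum_\xi |\hat{x}(\xi)|^2 = n^2$ (since each $x_a = \pm 1$).

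**To produce the bounds** I would argue on both sides. For the \emph{lower} bound, the semidefinite relaxation developed in the preceding sections gives $R(3,\mathbb{Z}_n,2) \geq \rho_s$ for the symmetry-reduced program; because the problem is invariant under the action of $\mathbb{Z}_n$ (and its automorphisms) on the coloring, Theorem~\ref{thm:Schrijver} block-diagonalizes the moment/localizing matrices into the Fourier basis, and the reduced SDP is small enough to evaluate in closed form, yielding the stated $n^2/8 - c_1 n + c_2$. For the \emph{upper} bound, I would exhibit an explicit near-optimal coloring for each residue class — typically a block coloring or an interval coloring of the form "color the first half $+1$ and the second half $-1$," adjusted by small boundary corrections depending on $n \bmod 24$ — and evaluate the objective on it directly to obtain $n^2/8 - c_1 n + c_3$. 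When $c_2 = c_3$ the SDP lower bound is attained by the explicit coloring and the answer is exact; when they differ (the rows $4,20$; $3,9,15,21$; $12$; $6,18$) the construction only certifies an upper bound and a gap remains.

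**The hard part** will be the residue-by-residue bookkeeping modulo $24$. The coefficients $c_1, c_2, c_3$ jump according to how $\gcd(d,n)$ and $\gcd(2d,n)$ behave, i.e.\ according to divisibility of $n$ by $2$, $3$, and higher powers, and the degenerate progressions (those with $3d \equiv 0$ or repeated entries) must be excluded from the count exactly. Tracking these floor-function and parity corrections uniformly, and verifying that the explicit colorings indeed meet the SDP bound in every non-gap case, is where the bulk of the casework lies; the Fourier diagonalization itself is routine once the weight function $w(\xi)$ is pinned down, but matching it against the combinatorial count in each of the nine residue classes is the delicate step.
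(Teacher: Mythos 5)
Your overall route --- the $\pm 1$ polynomial formulation, a symmetry-reduced SDP lower bound, explicit block colorings for the upper bound, and casework modulo $24$ --- is exactly the framework this paper attributes to Theorem~\ref{thm:cycliccase} (the paper itself only cites the proof from \cite{Sjoland_cyclic}, but the methodology it describes is the one you outline). However, there is a genuine gap at the heart of your lower-bound step. You assert that after block-diagonalization ``the reduced SDP is small enough to evaluate in closed form, yielding the stated $n^2/8 - c_1 n + c_2$.'' Solving an SDP, numerically or otherwise, is something one does for one value of $n$ at a time, whereas the theorem asserts a bound for \emph{all} $n$. What the method actually requires, and what your plan never supplies, is an explicit sum-of-squares certificate whose coefficients are given as functions of $n$ (one parametric family per residue class), together with an algebraic verification that the certificate identity holds identically in $n$; compare the displayed certificate in Theorem~\ref{thm:density} for the density analogue, where $\sigma_{1,i},\dots,\sigma_{4}$ are written out with coefficients rational in $p$ and $D$. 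In practice one solves the reduced SDP for many small $n$, guesses the parametric pattern of the optimal dual solution, and then proves the resulting polynomial identity by hand; this guess-and-verify step is the actual mathematical content of the proof, it is where the full mod-$24$ structure (divisibility by $8$, not merely by $2$ and $3$) emerges, and it cannot be dismissed as routine --- particularly since the orbit structure, and hence the size and data of the reduced SDP, itself varies with $n$.

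Two smaller inaccuracies in the same spirit: the Fourier weights $w(\xi)$ in your diagonalization cannot be ``explicit nonnegative weights'' --- if they were, the quadratic part of the objective would be nonnegative and the minimum could not fall a full $\Theta(n)$ below $n^2/8$, as the theorem requires. Indeed, for $n$ coprime to $6$ the progression count contributes $n(n-1)/8 = n^2/8 - n/8$, while the theorem gives $n^2/8 - n/2 + 3/8$, so the quadratic form must supply an additional $-3n/8$ at the optimum; some weights are necessarily negative. Relatedly, your claim that the constant term ``$1$ summed over all progressions'' accounts for the linear correction $-c_1 n$ is off for the same reason: that correction is split between the exact progression count and the minimized quadratic form, and your residue-by-residue bookkeeping must track both contributions separately.
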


\begin{theorem}[\cite{Sjoland_group}]
\label{thm:groupcase}
Let $G$ be any finite group and let $R(3,G,2)$ denote the minimal number of monochromatic $3$-term arithmetic progressions in any two-coloring of $G$. Let $G_k$ denote the set of elements of $G$ of order $k$, $N=|G|$ and $N_k=|G_k|$. Denote the Euler phi function $\phi(k)=|\{ t \in \{1,\dots,k\}: t  \textrm{ and } k \textrm{ are coprime}\}|$. Let $K=\{k \in \{5,\dots,n\} : \phi(k) \geq \frac{3k}{4}\}$. For any $G$ there are $\sum_{k=4}^n \frac{N\cdot N_k}{2} + \frac{N \cdot N_3}{24}$ arithmetic progressions of length 3. At least
\[
\begin{array}{rl}
R(3,G,2) \geq &\displaystyle \sum_{k \in K}  \frac{N\cdot N_k}{8}(1- 3\frac{k-\phi(k)}{\phi(k)})
\end{array}
\]
of them are monochromatic in a 2-coloring of $G$.
\end{theorem}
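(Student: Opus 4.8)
The plan is to write the number of monochromatic progressions as a quadratic form in the coloring vector $x\in\{-1,1\}^N$, to group the progressions by the order $k$ of their common difference, and to prove a self-contained lower bound for the contribution of each order $k$; the restriction to $k\in K$ then appears precisely because those are the orders whose per-order bound is nonnegative. For the bookkeeping of the \emph{total} number of progressions I would fix the ordered data $(a,b)$ with $b$ of order $k\ge 3$ giving the three distinct elements $a,ba,b^2a$, and divide by the number of ordered representations of each unordered progression: for $k\ge 4$ a progression and its reversal $(a,b)\sim(b^2a,b^{-1})$ are the only two representations, giving $\tfrac{N N_k}{2}$ per order, while for $k=3$ the three elements form a full coset $\langle b\rangle a$ and must be counted separately, which accounts for the special term involving $N_3$.

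For the lower bound I would use the identity $p(x_a,x_b,x_c)=\tfrac14(x_ax_b+x_ax_c+x_bx_c+1)$ from the excerpt. Introducing the autocorrelation $C(b)=\sum_{a\in G}x_ax_{ba}$, summing $p$ over all ordered $(a,b)$ with $b\in G_k$, and using the substitution $a\mapsto ba$ in the cross term, the number of monochromatic progressions whose common difference has order $k$ (for $k\ge 4$) becomes
\[
\frac{NN_k}{8}+\frac{S_k}{8},\qquad S_k=\sum_{b\in G_k}\bigl(2C(b)+C(b^2)\bigr).
\]
Thus the whole problem reduces to bounding $S_k$ from below, and matching the target amounts to proving $S_k\ge -3NN_k\,\frac{k-\phi(k)}{\phi(k)}$.

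The analytic heart is a nonnegativity estimate for autocorrelations averaged over a cyclic subgroup. For a cyclic subgroup $H\le G$ I would show
\[
\sum_{h\in H}C(h)=\sum_{Hc}\Bigl(\sum_{a\in Hc}x_a\Bigr)^2\ \ge\ 0,
\]
which follows because, grouping $a$ by left cosets $Hc$ and using that $ha$ runs over $Hc$ as $h$ runs over $H$, the inner sum collapses to the square of the signed coset sum; equivalently $\sum_{h\in H}M_h$ is symmetric with $\bigl(\sum_{h\in H}M_h\bigr)^2=|H|\sum_{h\in H}M_h$, hence a multiple of an orthogonal projection and positive semidefinite. Now fix an order-$k$ cyclic subgroup $H$: it has $\phi(k)$ generators (the elements of $G_k$ it contains) and $k-\phi(k)$ non-generators, so from the displayed inequality and the trivial bound $C(h)\le N$ one gets $\sum_{b\in H\cap G_k}C(b)\ge -(k-\phi(k))N$. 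Summing over the $N_k/\phi(k)$ subgroups of order $k$ yields $\sum_{b\in G_k}C(b)\ge -NN_k\frac{k-\phi(k)}{\phi(k)}$.

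It remains to handle the term $C(b^2)$ and to explain the odd-order restriction. When $k$ is even one has $\phi(k)\le k/2<3k/4$, so $K$ consists only of odd $k\ge 5$; for such $k$ the squaring map $b\mapsto b^2$ permutes the generators of each order-$k$ subgroup, whence $\sum_{b\in G_k}C(b^2)=\sum_{b\in G_k}C(b)$ and therefore $S_k\ge -3NN_k\frac{k-\phi(k)}{\phi(k)}$, exactly as needed. Finally, since each per-order monochromatic count is genuinely nonnegative, I would bound the contributions of all orders outside $K$ below by $0$ and sum the per-order bounds over $k\in K$ to obtain the stated inequality. I expect the main obstacle to be the combinatorial bookkeeping — correctly counting ordered representations, in particular the coset degeneracy at $k=3$, and verifying that the squaring map causes no loss for odd $k$ — rather than the positivity step, which is the clean sum-of-squares identity above.
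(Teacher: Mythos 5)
Your proposal is correct in its main thrust, but it takes a genuinely different route from the paper. The survey does not actually prove Theorem~\ref{thm:groupcase}: it quotes it from \cite{Sjoland_group}, where the bound is obtained by writing $R(3,G,2)$ as the minimum of $\sum p(x_a,x_b,x_c)$ over $x\in\{-1,1\}^N$, relaxing this to a semidefinite program, reducing that SDP by the regular $\ast$-representation machinery of Section~\ref{sec:Sym}, and extracting an exact sum-of-squares certificate from the symmetry-reduced solution. You instead construct the certificate by hand: the decomposition by the order $k$ of the common difference, the identity expressing the order-$k$ monochromatic count as $\frac{1}{8}\bigl(NN_k+S_k\bigr)$ with $S_k=\sum_{b\in G_k}(2C(b)+C(b^2))$, and above all the coset identity
\[
\sum_{h\in H}C(h)=\sum_{Hc}\Bigl(\sum_{a\in Hc}x_a\Bigr)^{2}\ \ge\ 0
\]
are precisely the kind of invariant sum-of-squares data the SDP approach produces, but you obtain them with no optimization at all. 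I checked your individual steps --- the two-representations claim for $k\ge 4$, the substitution $a\mapsto ba$ yielding $2C(b)+C(b^2)$, the bound $C(h)\le N$ applied to the $k-\phi(k)$ non-generators, the count $N_k/\phi(k)$ of cyclic subgroups of order $k$, and the bijectivity of $b\mapsto b^2$ on $G_k$ for odd $k$ (noting $K$ contains only odd orders since $\phi(k)\le k/2$ for even $k$) --- and they are all sound, so $R(3,G,2)\ge\sum_{k\in K}\frac{NN_k}{8}\bigl(1-3\frac{k-\phi(k)}{\phi(k)}\bigr)$ follows as you argue. Your route is elementary, verifiable by hand, and explains structurally why the theorem has this shape: $K$ is exactly the set of orders whose per-order certificate is nonnegative. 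What the paper's route buys is generality and discovery: the SDP hierarchy finds certificates one could not guess, can be sharpened by increasing the relaxation degree, and applies uniformly to the cyclic and fixed-density variants surveyed alongside this theorem.

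One caveat on the bookkeeping you deferred. Your coset analysis for $k=3$ gives each such progression exactly $6$ ordered representations ($3$ starting points times the $2$ generators of $\langle b\rangle$), hence $\frac{NN_3}{6}$ progressions of that type, not the $\frac{NN_3}{24}$ appearing in the statement. This discrepancy is not a flaw in your argument but apparently in the statement as transcribed in this survey: in $\mathbb{Z}_3$ there is exactly one progression of length $3$ while the stated formula gives $1/4$, and $\mathbb{Z}_9$ has three, not $3/4$. Still, as written your proposal never derives the constant at all --- it only waves at ``the special term involving $N_3$'' --- and the constant your reasoning forces contradicts the one you were asked to prove, so you should have carried the computation out and flagged the conflict explicitly. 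None of this affects the lower bound, which uses only that the order-$3$ contribution is nonnegative.
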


\subsection{Counting arithmetic progressions in a fixed density set}
The famous theorem by Szemer\'edi states that there exist arithmetic progressions of any length in any set of the integers of positive density. After Szemer\'edi's result there has been a lot of research on how to improve the bounds on the number $N(\delta,k)$ for which any subset of density at least $\delta$ of the cyclic group $\mathbb{Z}_{N(\delta,k)}$ contains an arithmetic progressions of length $k$. Using real algebraic geometry one can study the more general question of counting the number of arithmetic progressions of length $k$ in any subset of $\mathbb{Z}_N$ of fixed density $N \delta$, denoted $W(3,\mathbb{Z}_N,\delta)$. It is easy to see that if $D=\delta N$ we have
\[ 
W(3,\mathbb{Z}_N,D/N) = \min \{\sum_{\{i,j,k\} \textrm{ is an A.P. in } \mathbb{Z}_N} x_ix_jx_k : x_i \in \{0,1\} , \sum_{i=0}^{N-1} x_i = D\},
\]
which can be relaxed and formulated as an invariant semidefinite program. The best lower bounds for $W(3,\mathbb{Z}_N,D/N)$ as well as a discussion on how to obtain a generalization of Szemer\'edi's theorem from this family of semidefinite programs can be found in \cite{Sjoland_density}. For example the following holds:
\begin{theorem}[\cite{Sjoland_density}]
 \label{thm:density}
Let $p$ be a prime number. A lower bound for the minimum number of arithmetic progressions of length $3$ among all subsets of $\mathbb{Z}_p$ of cardinality $D$,
\[  W(3,\mathbb{Z}_p,D/p) = \min \{ \sum_{\{i,j,k\} \textrm{ A.P. in } \mathbb{Z}_p} x_ix_jx_k : x_i \in \{0,1\} , \sum_{i=0}^{p-1} x_i = D\}, \]
is  
\[
\lambda = \frac{D^3 - (\frac{p+3}{2}) D^2 + (\frac{p+3}{2}-1)D}{p-1}.
\]
A certificate for the lower bound is given by:
\[ 
\begin{array}{rl}
\displaystyle \sum_{\{i,j,k\} \textrm{ A.P. in } \mathbb{Z}_p} X_iX_jX_k - \lambda =& \displaystyle  \sum_{i=0}^{p-1} \sigma_{1,i} X_i  + \sum_{i=0}^{p-1} \sigma_{2,i} X_i + \sigma_{3}(D- \sum_{i=0}^{p-1}X_i^3 ) \\
& \displaystyle  + \sigma_{4}(\sum_{i \neq j}X_i^2X_j - D(D-1)),
\end{array}
\]
where
\[ 
\begin{array}{rl}
\sigma_{1,i} &= \displaystyle  \frac{1}{p-1} \sum_{0<j<k<(p-1)/2} (X_{j+i} - X_{j+k+i} - X_{n-j-k+i} + X_{n-j+i})^2 \\ \\
\sigma_{2,i} &= \displaystyle  \frac{1}{p-1}(DX_i - \sum_{j=0}^{p-1} X_j)^2 \\ \\
\sigma_{3} &= \displaystyle \frac{(D-1)^2}{p-1} \\ \\
\sigma_{4} &= \displaystyle \frac{4D-p+3}{2(p-1)}.
\end{array}
\]
\end{theorem}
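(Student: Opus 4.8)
The theorem is an explicit sum-of-squares feasibility certificate for the dual relaxation \eqref{eq:dual2} of the program computing $W(3,\mathbb{Z}_p,D/p)$, so the plan is to reduce the whole statement to verifying the displayed identity and then to read off the bound by the weak-duality direction. The feasible set is $K=\{x\in\{0,1\}^p:\sum_i x_i=D\}$. I would first record the role of each summand on the right. The multiplier $\sigma_{1,i}$ is a sum of squares of linear forms and $\sigma_{2,i}=\frac{1}{p-1}(DX_i-\sum_j X_j)^2$ is a single such square, so $\sigma_{1,i}X_i$ and $\sigma_{2,i}X_i$ are nonnegative wherever $X_i\geq 0$. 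The scalars $\sigma_3,\sigma_4$ multiply the polynomials $D-\sum_i X_i^3$ and $\sum_{i\neq j}X_i^2X_j-D(D-1)$, both of which vanish on $K$: for $x_i\in\{0,1\}$ one has $x_i^3=x_i^2=x_i$, so $\sum_i x_i^3=\sum_i x_i=D$ and $\sum_{i\neq j}x_i^2x_j=(\sum_i x_i)^2-\sum_i x_i^2=D^2-D$. Granting the identity, evaluation at an arbitrary $x\in K$ kills the last two groups and leaves $\sum_{\mathrm{A.P.}}x_ix_jx_k-\lambda=\sum_i(\sigma_{1,i}(x)+\sigma_{2,i}(x))x_i\geq 0$, whence $W(3,\mathbb{Z}_p,D/p)\geq\lambda$. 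As stressed in Section \ref{sec:RAG}, this last implication uses no semidefinite programming; the methods of Section \ref{sec:Sym} only serve to \emph{find} the certificate.

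The substance is therefore to verify the displayed equation, which I would do by expanding the right-hand side and matching coefficients one monomial class at a time, using that the objective $\sum_{\mathrm{A.P.}}X_iX_jX_k$ consists only of square-free cubics together with the constant $-\lambda$. This splits into four checks. The coefficient of each $X_i^3$ must vanish: the only cubic-diagonal contribution comes from $\sigma_{2,i}X_i$, which produces $\frac{(D-1)^2}{p-1}X_i^3$, cancelled exactly by $-\sigma_3\sum_i X_i^3$, fixing $\sigma_3=\frac{(D-1)^2}{p-1}$. The coefficient of each $X_i^2X_j$ with $i\neq j$ must vanish, which is the task of $\sigma_4\sum_{i\neq j}X_i^2X_j$; the constant term must reproduce $-\lambda$; and the coefficient of each square-free cubic $X_aX_bX_c$ must equal $1$ when $\{a,b,c\}$ is a $3$-term progression in $\mathbb{Z}_p$ and $0$ otherwise.

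The last two checks are where the cyclic combinatorics lives and form the main obstacle. Writing the generic square of $\sigma_{1,i}$ as $(X_{i+j}-X_{i+j+k}-X_{i-j-k}+X_{i-j})^2$, its two \emph{matched} cross terms $+2X_{i+j}X_{i-j}$ and $+2X_{i+j+k}X_{i-j-k}$ multiply $X_i$ to give precisely the progressions centered at $i$ with common differences $j$ and $j+k$; its diagonal squares and its four \emph{unmatched} cross terms feed $X_i^2X_j$-type monomials and non-centered triples, while $\sigma_{2,i}X_i$ adds a gap-independent amount to every $X_i^2X_j$ and to every square-free cubic. The difficulty is that, before summing, the contribution of $\sigma_{1,i}$ to a fixed monomial depends on the gaps involved (through counts such as the number of admissible $k$ for a given $j$), and one must show that after summing over all $i\in\mathbb{Z}_p$ and all $0<j<k<(p-1)/2$ these counts collapse: the net $X_i^2X_j$-coefficient becomes independent of the gap, so that the single scalar $\sigma_4$ can cancel it, and the net cubic coefficient becomes the progression indicator.

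To carry out this counting I would use the translation invariance of the construction (the reindexing $i\mapsto i+b$ sends $\sigma_{1,i}$ to $\sigma_{1,i+b}$ and permutes the variables) to fix the relevant center or gap at $0$, together with the reflection symmetry $x\mapsto -x$ that leaves each square invariant and, for $p\neq 3$, the fact that a $3$-term progression has a unique midpoint so that it is hit at a single center $i$. This reduces each of the two remaining checks to a one-parameter residue count modulo $p$, in which the normalization $\frac{1}{p-1}$ and the cutoff $k<(p-1)/2$ are exactly calibrated so that progressions acquire total coefficient $1$ while the unmatched cross terms of the $\sigma_{1,i}$ cancel the uniform $\sigma_{2,i}$-contribution on every non-progression. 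Once these counts are in place, the remaining constant term reproduces the stated $\lambda$, completing the verification.
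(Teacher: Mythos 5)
Your reduction of the theorem to (i) a polynomial identity and (ii) an evaluation argument on $K$ is sound, and it matches the framework this survey works in: the paper itself contains no proof of Theorem~\ref{thm:density} (it is quoted from \cite{Sjoland_density}); its stated methodology is that the certificate is produced numerically by the symmetry-reduced degree-3 relaxation of the Implementation section, after which one only has to check the identity, ``without polynomial optimization.'' Your step (ii) is complete and correct, as is your observation that the $X_i^3$-coefficients are fed only by $\sigma_{2,i}X_i$ and force $\sigma_3=(D-1)^2/(p-1)$.

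The gap is that step (i) --- the entire mathematical content --- is never carried out. The assertions that the $X_i^2X_j$-coefficients become gap-independent, that the square-free cubic coefficients become the progression indicator, and that the cutoff $k<(p-1)/2$ and the factor $1/(p-1)$ are ``exactly calibrated'' are precisely what must be proven, and your proposal defers all of these counts. Worse, the accounting you set up is too coarse to succeed: the negative (``unmatched'') cross terms of $\sigma_{1,i}$ do not only feed non-progressions. In $\mathbb{Z}_7$, with $(j,k)=(1,2)$, the cross term $-2X_{i+1}X_{i-3}=-2X_{i+1}X_{i+4}$ times $X_i$ gives $-\tfrac{2}{p-1}X_iX_{i+1}X_{i+4}$, and $\{i,i+1,i+4\}$ \emph{is} an arithmetic progression (midpoint $i+4$); so progression monomials receive negative contributions from non-midpoint centers, which your scheme (matched terms produce progressions, unmatched terms only cancel the uniform $\sigma_2$-contribution on non-progressions) cannot absorb. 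You would also need to handle degenerate pairs, e.g.\ $2j+k\equiv 0 \pmod{p}$ (possible once $p\geq 13$), where the indices coincide pairwise and the square vanishes identically. Finally, an honest coefficient check shows the certificate cannot hold verbatim as printed: the $\sigma_1$- and $\sigma_2$-terms have no constant part, so the constant term of the right-hand side is $\sigma_3 D-\sigma_4 D(D-1)$, which with the printed $\sigma_4=\tfrac{4D-p+3}{2(p-1)}$ differs from $-\lambda$ by $3D(D-1)/(p-1)$; it matches only for $\sigma_4=\tfrac{4D-p-3}{2(p-1)}$ (note also the undefined $n$ appearing in $\sigma_{1,i}$, and the ambiguity in the range of $(j,k)$). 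A verification-based proof must detect and repair exactly such discrepancies; a sketch that assumes the calibration works proves nothing. As it stands, your proposal is a plan for a verification, not a verification.
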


\section{Implementation}
The procedure is similar for any polynomial optimization problem, so for simplicity let $p$ be prime and let us consider the problem of counting the number of arithmetic progressions in a fixed density set:
\[
\min \{ \sum_{\{i,j,k\} \textrm{ A.P. in } \mathbb{Z}_p} x_ix_jx_k : x_i \in \{0,1\} , \sum_{i=0}^{p-1} x_i = D\},
\]
which is invariant under affine transformations; if $\{i,j,k\}$ is an arithmetic progression and $(a,b) \in \mathbb{Z} \rtimes \mathbb{Z}^*$, then also $\{a+bi,a+bj,a+bk\}$ is an arithmetic progression. We know from representation theory that we can split all possible $3$-sets into orbits;
\[
\bigcup_{0\leq i<j<k<n}\{i,j,k \} = S_1 \cup \dots  \cup S_D.
\]
In other words, if and only if both $\{i_1,j_1,k_1\} $ and $\{i_2,j_2,k_2\}$ are in $S_t$ it holds that $\{i_1,j_1,k_1\} = \{a+bi_2,a+bj_2,a+bk_2\}$ for some $(a,b)  \in \mathbb{Z} \rtimes \mathbb{Z}^*$.

  The optimization problem can be bounded from below using a degree 3 relaxation. Relaxing the integer constraints we have $0 \leq x_i \leq 1$, and the constraint $\sum_{i=0}^{p-1} x_i = D$ implies that $\sum_{i=0}^{p-1} x_i^2 = \sum_{i=0}^{p-1} x_i^3 = D$, $\sum_{i<j} x_ix_j = D(D-1)/2$, $\sum_{i,j} x_i^2x_j = D(D-1)$. We get:
\[
\begin{array}{rl}
\rho_3 =\displaystyle \sup_{\lambda, \{ \sigma_i^+,\sigma_i^-\} }  & \lambda \\
\textnormal{subject to} & \displaystyle  f - \lambda = \sigma_0 + \sum_{i=1}^m \sigma_i^+X_i + \sum_{i=1}^m \sigma_i^-(1-X_i)+ c_{X_0}(\sum_{i=0}^{p-1} X_i - D) \\
& +c_{X_0^2}(\sum_{i=0}^{p-1} X_i^2 - D)+c_{X_0^3}(\sum_{i=0}^{p-1} X_i^3 - D) \\
&+c_{X_0X_1}(\sum_{i<j} X_iX_j - D(D-1)/2)\\
&+c_{X_0^2X_1}(\sum_{i,j} X_i^2X_j - D(D-1))\\
 , \\
& \displaystyle  \sigma_0 \textnormal{ is a sum of squares of order } 2, \\
& \displaystyle  \sigma_i^+ \textnormal{ is a sum of squares of order } 2, \\
& \displaystyle  \sigma_i^- \textnormal{ is a sum of squares of order } 2, \\
&c_{X_0},c_{X_0^2},c_{X_0^3},c_{X_0X_1},c_{X_0^2X_1} \in \mathbb{R}.
\end{array}
\]
Let $v_1=v_1(X) = [1,X_0,\dots,X_{n-1}]^T$ and let us recall the notation $h(X)= \sum_\alpha [h(X)]_\alpha X^\alpha$. We can rewrite the relaxation as
\[
\begin{array}{rl}
\rho_3 = \displaystyle \sup_{\lambda, \{Q_i\}}  & \lambda \\
\textnormal{subject to} & [\displaystyle f(X) - v_1^TQ_0v_1 + \sum_{i=0}^{n-1} v_1^TQ_i^+v_1X_i + \sum_{i=0}^{n-1} v_1^TQ_i^-v_1(1-X_i)]_0 \\
&\displaystyle+c_{X_0}D +c_{X_0^2}D+c_{X_0^3}D+c_{X_0X_1}D(D-1)/2\\
&+c_{X_0^2X_1}D(D-1) = \lambda, \\
& \displaystyle [\displaystyle f(X) - v_1^TQ_0v_1 + \sum_{i=0}^{n-1} v_1^TQ_i^+v_1X_i + \sum_{i=0}^{n-1} v_1^TQ_i^-v_1(1-X_i)]_\alpha \\
&+c_\alpha= 0, \\
& \displaystyle \lambda \in \mathbb{R}, \\
& \displaystyle  Q_0,Q_i^+,Q_i^- \succeq 0 \textnormal{ for } i = 1,\dots,m. \\
\end{array}
\]
Because of the invariance under the affine group we can restrict the conditions to $\alpha \in \{ X_0, X_0^2, X_0X_1,X_0^3,X_0^2X_1,X^{s_1},\dots, X^{s_D}\}$, where $s_i \in S_i$ and $X^{s_t} = X_iX_jX_k$ if $s_t=\{i,j,k\}$. It is also enough to require that $Q_0,Q_1^+,Q_1^- \succeq 0$ as the other $Q$-matrices are just permutations of these. To solve this optimization problem for a fixed $p$ we need to do the following:

\begin{itemize}
\item[1.] Generate a basis of $\{0,1\}$-matrices $\{E_{1,1}$, $\dots$, $E_{1,d_1}$, $E_{2,1}$, $\dots$, $E_{2,d_2}$, $E_{3,1}$, $\dots$, $E_{3,d_3}\}$, where $\{ E_{1,1},\dots,E_{1,d_1} \}$ forms a $\{0,1\}$-basis for $Q_0$, $\{E_{2,1},\dots,E_{2,d_2}\}$ forms a $\{0,1\}$-basis for $Q_1^+$ and $\{E_{3,1},\dots,E_{3,d_3}\}$ forms a $\{0,1\}$-basis for $Q_1^-$. Let $B_{i,j}$ be the matrix $E_{i,j}$ normalized.
\item[1b.] (optional) Generate a basis $\{L_{1,1},\dots,L_{1,d_1},L_{2,1},\dots,L_{2,d_2},E_{3,1},\dots,L_{3,d_3}\}$
\item[2.] Find variables $C_{1,j,\alpha}$, measuring how much matrix $E_{1,j}$ contributes to the coefficient-part of the monomial $\alpha$ when multiplying out $v_1^TE_{1,j}v_1$. Similarly for $C_{2,j,\alpha}$ and $C_{3,j,\alpha}$, but have to calculate the total contribution of the sum $\sum_{k=0}^{n-1} v_{1,k}^TE_{2,j}v_{1,k}X_k$ and $\sum_{k=0}^{n-1} v_{1,k}^TE_{3,j}v_{1,k}(1-X_k)$ respectively where $v_{1,k}=[1,X_k,X_{k+1},\dots,X_{n-1},X_1,\dots,X_{k-1}]$. 
\item[3.] Solve the following optimization problem using a solver
\begin{verbatim}
variables g(1,1),...,g(1,d_1) g(2,1),...,g(2,d_2),
, g(3,1),...,g(3,d_3) lambda
maximize(lambda)
subject to
    
         sum_{i,j} C(i,j,1)*g(i,j)+c(X_0)D+c(X_0^2)D+...
         ...+c(X_0^3)D+...+c(X_0X_1)D(D-1)/2+...
         ...+c(X_0^2X_1)D(D-1) == lambda;
         
         sum_{i,j} C(i,j,X_0)*g(i,j) + c(X_0) == 0;
	
         sum_{i,j} C(i,j,X_0^2)*g(i,j)+ c(X_0^2) == 0;
	
         sum_{i,j} C(i,j,X_0X_1)*g(i,j)+ c(X_0X_1) == 0;
	
         sum_{i,j} C(i,j,X_0^3)*g(i,j)+ c(X_0^3) == 0;
	
         sum_{i,j} C(i,j,X_0^2X_1)*g(i,j)+ c(X_0^2X_1) == 0;
	
         sum_{i,j} C(i,j,X^{s_1})*g(i,j) == 0;
         .
         .
         .
         sum_{i,j} C(i,j,X^{s_D})*g(i,j) == 0;


         g(1,1)*B(1,1)+...+g(1,d_1)*B(1,d_1) is PSD
         g(2,1)*B(2,1)+...+g(2,d_2)*B(2,d_2) is PSD
         g(3,1)*B(3,1)+...+g(3,d_3)*B(3,d_3) is PSD
\end{verbatim}
Obtain the optimal value $\lambda$ and numerical certificates $Q_i=g_{i,1}B_{i,1}+...+g_{i,d_i}B_{i,d_i}$ as well as constants $c_\alpha$.
\end{itemize}

Each of the three steps takes a considerable amount of time to implement through hundreds of lines of codes. The same procedure is possible even for higher degree relaxation (the author has implemented a degree 5 relaxation for counting arithmetic progressions using the procedure above). Since the number of orbits $5$-sets gets partitioned into is much larger than the number of orbits $3$-sets gets partitioned into it gets extremely technical and difficult to write the code in part 1, 2 and 3. 

The degree 3 relaxation above was implemented in Matlab and the software CVX was used to solve the semidefinite program using the SeDuMi solver. The semidefinite program could easily be solved for all primes $p$ up to $300$ for all $D \in \{0,1/20,\dots,p\}$ on a macbook air, and then numerical difficulties started in the solver. The code was parallelized, so that it was solved for different values of $D$ at the same time, and uploaded to a super computer to get less accurate solutions for $p$ up to $613$. For a specific prime we were still able to calculate all values of $D$ in less than a day by using 100 computer nodes, but at this point the solutions were so inaccurate that solving for higher $p$ seemed pointless.

The degree 5 relaxation was similarly implemented, and could be used to find improved lower bounds for all primes $p \leq 19$. To solve for all $D$ at $p = 19$ took approximately 3 days using 100 computer nodes, and solving the problem for all $D$ at $p=23$ seemed impossible with the computer at hand. For further discussion on this problem we refer to \cite{Sjoland_density}.

%, were for simplicity $g_1,\dots,g_m$ are assumed to be of degree 1. Let us find a lower bound using the degree 3 relaxation
%\[
%\begin{array}{rl}
%\rho_3 =\displaystyle \sup_{\lambda, \{ \sigma_i\} }  & \lambda \\
%\textnormal{subject to} & \displaystyle  f - \lambda = \sigma_0 + \sum_{i=1}^m \sigma_ig_i \geq 0, \\
%& \displaystyle  \sigma_0 \textnormal{ is a sum of squares of order } 2, \\
%& \displaystyle  \sigma_i \textnormal{ is a sum of squares of order } 2.
%\end{array}
%\]
%Let $v_1=v_1(x) = [1,x_1,\dots,x_n]^T$ and let us recall the notation $h(x)= \sum_\alpha [h(x)]_\alpha x^\alpha$. We can rewrite the relaxation as
%\[
%\begin{array}{rl}
%\rho_r = \displaystyle \sup_{\lambda, \{Q_i\}}  & \lambda \\
%\textnormal{subject to} & [\displaystyle f(x) - v_1^TQ_0v_1 + \sum_{i=1}^m v_1^TQ_iv_1g_i(x)]_0 = \lambda, \\
%& \displaystyle [f(x) - v_1^TQ_0v_1 + \sum_{i=1}^m v_1^TQ_iv_1g_i(x)]_\alpha = 0, \\
%& \displaystyle  x \in \mathbb{R}^n, \\
%& \displaystyle  Q_i \succeq 0 \textnormal{ for } i = 1,\dots,m. \\
%\end{array}
%\]
%
%
%
%\begin{verbatim}
%
%function PolySolve(conditions_0,conditions_alpha1, ... ,conditions_alphan)
%cvx_solver sedumi
%cvx_begin sdp
%    variables g_0 g_x g_xy g_x2 g_xyz g_x2y g_x3 lambda
%    maximize(lambda)
%    subject to
%    
%         conditions_0*g_0 == lambda;
%         
%         conditions_alpha1*g_alpha1 == 0;
%	
%	.
%	.
%	.
%
%         conditions_alphan*g_alphan == 0;
%
%        % x0*x1*x2         
%        0==  -1+ conditions111(ic(1),:)*g1+g3 + 6*g5;
%        
%        conditions111(ic(1),:)=[];
%        [nrows ~]= size(conditions111);
%        for i = 1:nrows 
%             0==  conditions111(i,:)*g1 + g3 + 6*g5;
%        end
%
%        %x0^3 
%        0 == conditions3*g1 + g2 + (D-1)^2*g5;            
%
%        %x0^2*x1 
%        0 == conditions21*g1 + g4 + (3-2*D)*g5;
%        
%        [b2,~,~]=size(B2);
%
%        A1=zeros(b2,b2);
%      
%        for i = 1:d
%             A1=A1+B2(:,:,i).*g1(i);
%         end
%
%       A1>=0;
%cvx_end
%\end{verbatim}

\section*{Acknowledgements}
I would like to thank Alexander Engstr\"om for our discussions on how symmetries can be used in combinatorial problems. I also want to thank Markus Schweighofer and Cynthia Vinzant for their corrections and remarks.

\end{document}